\newtheorem{theorem}{Theorem}
\DeclareMathOperator{\hr}{\mathbb H^2\times \mathbb R}
\DeclareMathOperator{\rr}{\mathbb R}
\DeclareMathOperator{\tor}{\mathbb T}
\DeclareMathOperator{\hh}{\mathbb H^2}
\newtheorem{lemma}{Lemma}
\newtheorem{remark}{Remark}
\theoremstyle{definition}\newtheorem{definition}{Definition}
\numberwithin{equation}{section}
\title{On doubly periodic minimal surfaces in $\mathbb H^2 \times \mathbb R$ with finite total curvature in the quotient space}
\author{Laurent Hauswirth and Ana Menezes \footnote{ {\it{The authors were partially supported by the ANR-11-IS01-0002 grant. The second author was partially supported by CNPq-Brazil and IMPA.}}}}
\date{}
\begin{document}

\maketitle

\begin{abstract}
In this paper we develop the theory of properly immersed minimal surfaces in the quotient space $\hr/G,$ where $G$ is a subgroup of isometries generated by a vertical translation and a horizontal isometry in $\hh$ without fixed points. The horizontal isometry can be either a parabolic translation along horocycles in $\hh$ or a hyperbolic translation along a geodesic in $\hh.$ In fact, we prove that if a properly immersed minimal surface in $\hr/G$ has finite total curvature then its total curvature is a multiple of $2\pi,$ and moreover, we understand the geometry of the ends. These theorems hold true more generally for properly immersed minimal surfaces in $M\times\mathbb S^1,$ where $M$ is a hyperbolic surface with finite topology whose ends are isometric to one of the ends of the above spaces $\hr/G.$
\end{abstract}

\section{Introduction}
Among all the minimal surfaces in $\rr^3$, the ones of finite total curvature are the best known. In fact, if a minimal surface in $\rr^3$ has finite total curvature then this minimal surface is either a plane or its total curvature is a non-zero multiple of $2\pi.$ Moreover, if the total curvature is $-4\pi,$ then the minimal surface is either the Catenoid or the Enneper's surface \cite{O}.

In 2010, the first author jointly with Harold Rosenberg \cite{HR} developed the theory of complete embedded minimal surfaces of finite total curvature in $\hr.$ In that work they proved that the total curvature of such surfaces must be a multiple of $2\pi,$ and they gave simply connected examples whose total curvature is $-2\pi m,$ for each nonnegative integer $m.$

In the last few years, many people have worked on this subject and classified some minimal surfaces of finite total curvature in $\hr$ (see  \cite{HNST, HSET, MR, SET}). 

In \cite{MR} Morabito and Rodr\' iguez constructed for $k\geq 2$ a $(2k-2)$-parameter family of properly embedded minimal surfaces in $\hr$ invariant by a vertical translation which have total curvature $4\pi(1-k),$ genus zero and $2k$ vertical Scherk-type ends in the quotient by the vertical translation. Moreover, independently, Morabito and Rodr\' iguez \cite{MR} and Pyo \cite{P} constructed for $k\geq2$ examples of properly embedded minimal surfaces with total curvature $4\pi(1-k),$ genus zero and $k$ ends, each one asymptotic to a vertical plane. In particular, we have examples of minimal annuli with total curvature $-4\pi.$

It was expected that each end of a complete embedded minimal surface of finite total curvature in $\hr$ was asymptotic to either a vertical plane or a Scherk graph over an ideal polygonal domain. However in \cite{PR}, Pyo and Rodr\' iguez constructed new simply-connected examples of minimal surfaces of finite total curvature in $\hr,$ showing this is not the case.

In this work we consider $\hr$ quotiented by a subgroup of isometries $G\subset \mbox{Isom}(\hr)$ generated by a horizontal isometry in $\hh$ without fixed points, $\psi$, and a vertical translation, $T(h),$ for some $h>0.$ The isometry $\psi$ can be either a parabolic translation along horocycles in $\hh$ or a hyperbolic translation along a geodesic in $\hh.$ We prove that if a properly immersed minimal surface in $\hr/G$ has finite total curvature then its total curvature is a multiple of $2\pi,$ and moreover, we understand the geometry of the ends. More precisely, we prove that each end of a properly immersed minimal surface of finite total curvature in $\hr/G$ is asymptotic to either a horizontal slice, or a vertical geodesic plane or the quotient of a \textit{Helicoidal plane}. Where by \textit{Helicoidal plane} we mean a minimal surface in $\hr$ which is parametrized by $X(x,y)=(x,y,ax+b)$ when we consider the halfplane model for $\hh.$ 

Let us mention that these results hold true for properly immersed minimal surfaces in $M\times\mathbb S^1,$ where $M$ is a hyperbolic surface $(K_M=-1)$ with finite topology whose ends are either isometric to $\mathcal M_+$ or $\mathcal M_-,$ which we define in the next section.

\section{Preliminaries}
Unless otherwise stated, we use the Poincar\' e disk model for the hyperbolic plane, that is
$$
\mathbb H^2=\{(x,y)\in\rr^2|\ x^2+y^2<1\}
$$
with  the hyperbolic metric $g_{-1}=\sigma g_0=\frac{4}{(1-x^2-y^2)^2}g_0,$ where $g_0$ is the Euclidean metric in $\rr^2.$ In this model, the asymptotic boundary $\partial_{\infty}\mathbb H^2$ of $\mathbb H^2$ is identified with the unit circle and we denote by $p_o$ the point $(1,0)\in\partial_\infty\hh.$ %Consequently, any point in the closed unit disk is viewed as either a point in $\mathbb H^2$ or a point in $\partial_{\infty}\mathbb H^2.$

We write $\overline{pq}$ to denote the geodesic arc between the two points $p,q.$

We consider the quotient spaces $\hr/G,$ where $G$ is a subgroup of Isom($\hr$) generated by a horizontal isometry on $\hh$ without fixed points, $\psi$, and a vertical translation, $T(h),$ for some $h>0.$ The horizontal isometry $\psi$ can be either a horizontal translation along horocycles in $\hh$ or a horizontal translation along a geodesic in $\hh.$

Let us analyse each one of these cases for $\psi.$

Consider any geodesic $\gamma$ that limits to $p_o$ at infinity parametrized by arc length. Let $c(s)$ be the horocycles in $\hh$ tangent to $p_o$ at infinity that intersects $\gamma$ at $\gamma(s)$ and write $d(s)$ to denote the horocylinder $c(s)\times\rr$ in $\hr.$ Taking two points $p,q \in c(s),$ let $\psi:\hr\rightarrow\hr$ be the parabolic translation along $d(s)$ such that $\psi(p)=q.$ We have $\psi(d(s))=d(s)$ for all $s.$ If $G=[\psi, T(h)],$ then the manifold $\mathcal M$ which is the quotient of $\hr$ by $G$ is diffeomorphic to $\tor^2\times \rr,$ where $\tor^2$ is the 2-torus. Moreover, $\mathcal M$ is foliated by the family of tori $\tor(s)=d(s)/G,$ which are intrinsically flat and have constant mean cuvature $\frac{1}{2}.$ (See Figure \ref{M2}).

\begin{figure}[h]
 \centering
\includegraphics[height=4.2cm]{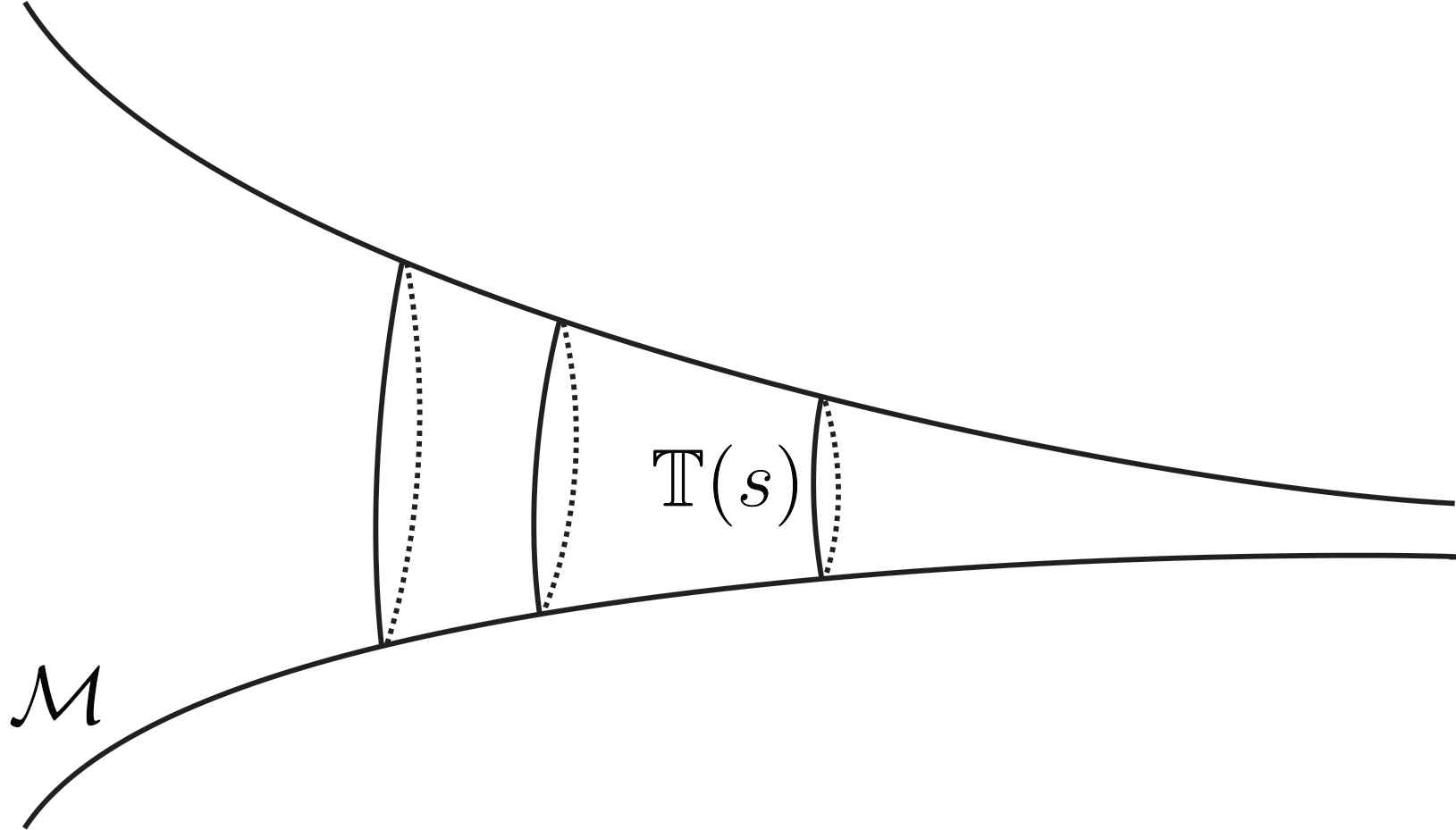}
\caption{$\mathcal M=\hr/[\psi, T(h)]$, where $\psi$ is a parabolic isometry.}
\label{M2}
\end{figure}

Now take a geodesic $\gamma$ in $\hh$ and consider $c(s)$ the family of equidistant curves to $\gamma,$ with $c(0)=\gamma.$ Write $d(s)$ to denote the plane $c(s)\times\rr$ in $\hr.$ Given two points $p,q\in c(s),$ let $\psi:\hr\rightarrow\hr$ be the hyperbolic translation along $\gamma$ such that $\psi(p)=q.$ We have $\psi(d(s))=d(s)$ for all $s.$ If $G=[\psi, T(h)],$ then the manifold $\mathcal M$ which is the quotient of $\hr$ by $G$ is also diffeomorphic to $\tor^2\times \rr$ and $\mathcal M$ is foliated by the family of tori $\tor(s)=d(s)/G,$ which are intrinsically flat and have constant mean cuvature $\frac{1}{2}$tanh$(s).$ (See Figure \ref{M11}).

\begin{figure}[h]
 \centering
\includegraphics[height=4.2cm]{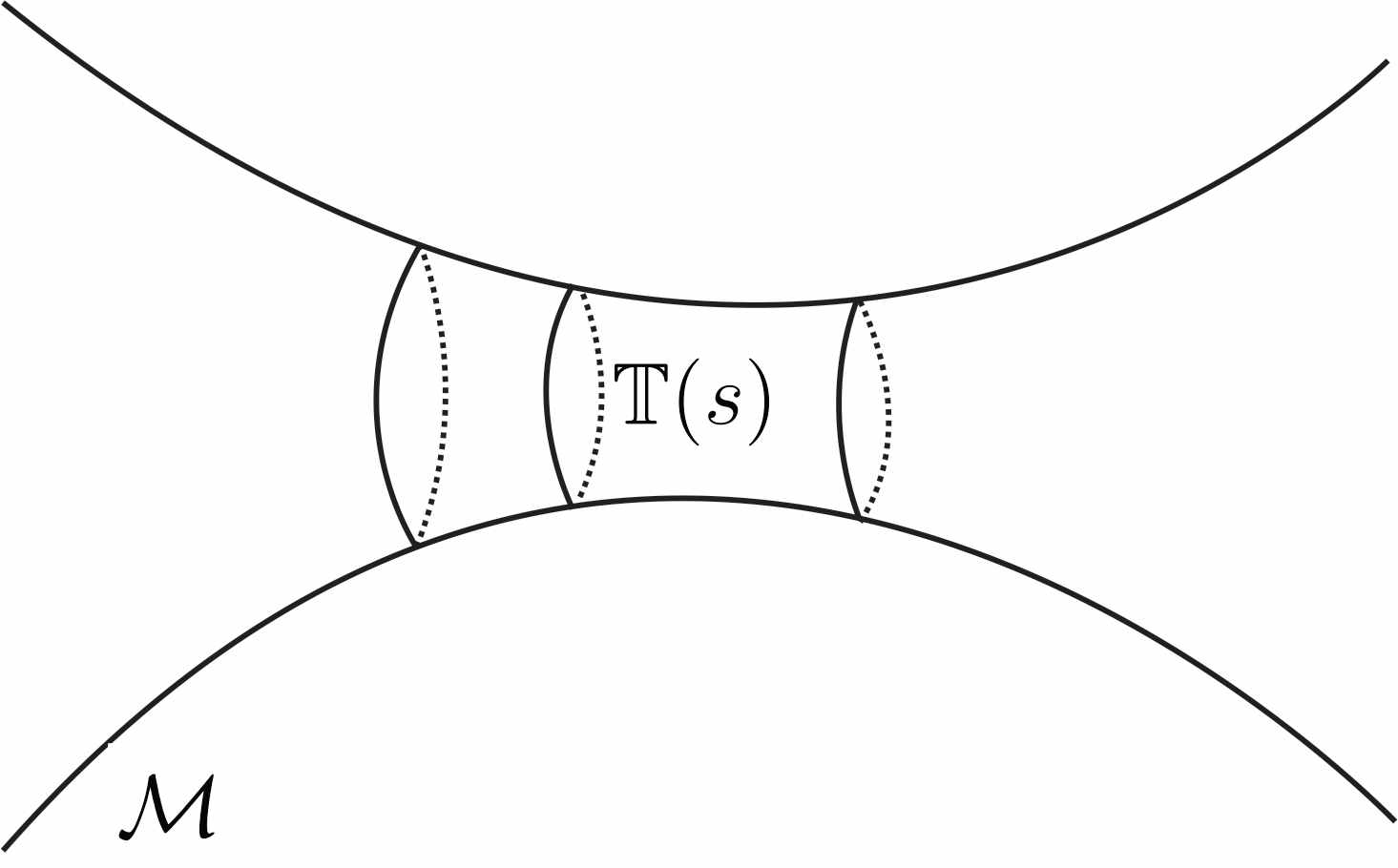}
\label{M11}
\caption{$\mathcal M=\hr/[\psi, T(h)]$, where $\psi$ is a hyperbolic isometry.}
\end{figure}

In these quotient spaces we have two different types of ends. One where the injectivity radius goes to zero at infinity, which we denote by $\mathcal M_+,$ and another one where the injectivity radius is strictly positive, which we denote by $\mathcal M_-$.

Hence $\mathcal M_+=\bigcup_{s\geq 0} d(s)/[\psi,T(h)],$ where $\psi$ is a parabolic translation along horocycles, and $\mathcal M_-=\bigcup_{s\geq 0} d(s)/[\psi,T(h)],$ for $\psi$ hyperbolic translation along a geodesic in $\hh,$ or $\mathcal M_-=\bigcup_{s\leq 0}d(s)/[\psi, T(h)],$ where $\psi$ can be either a parabolic translation along horocycles or a hyperbolic translation along a geodesic in $\hh.$ (See Figure \ref{M1eM21}).

\begin{figure}[h]
 \centering
\includegraphics[width=12.4cm]{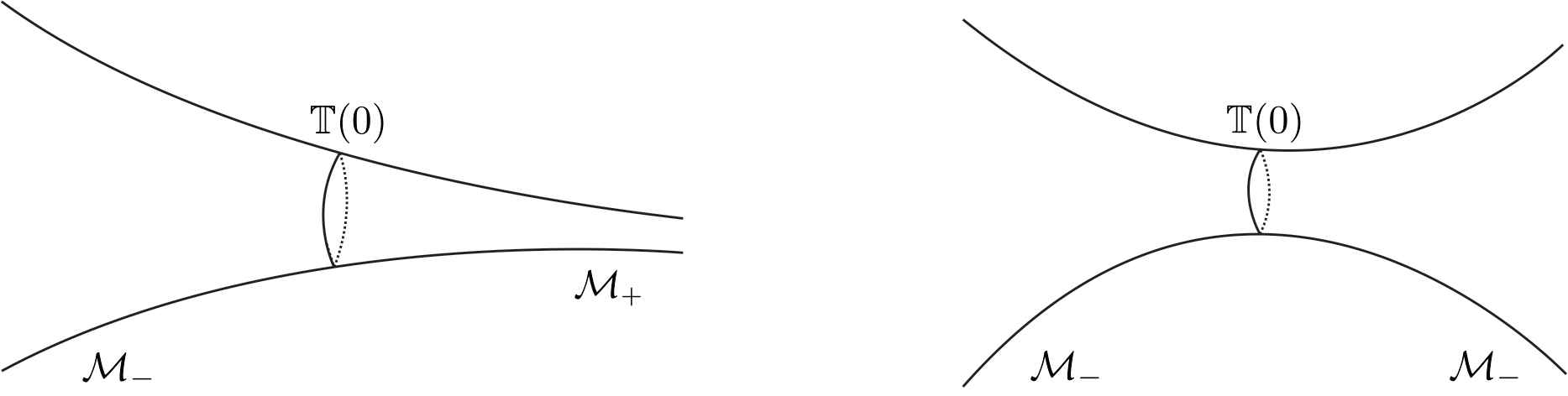}
\caption{$\mathcal M_+$ and $\mathcal M_-$.}
\label{M1eM21}
\end{figure}

From now one we will not distinguish between the two quotient spaces above. We will denote both by $\mathcal M.$ 

Let $\Sigma$ be a Riemannian surface and $X:\Sigma \rightarrow \mathcal M$ be a minimal immersion. As 
$$
\mathcal M=\hr/[\psi, T(h)]\cong\mathbb H^2/[\psi]\times \mathbb S^1,
$$
we can write $X=(F, h): \Sigma \rightarrow \mathbb H^2/[\psi]\times \mathbb S^1,$ where $F: \Sigma \rightarrow \mathbb H^2/[\psi]$ and $h: \Sigma \rightarrow \mathbb S^1$ are harmonic maps. We consider local conformal parameters $z= x+iy$ on $\Sigma.$ Hence
\begin{equation}
\begin{array}{ccc}
|F_x|^2_{\sigma}+(h_x)^2= |F_y|^2_{\sigma}+(h_y)^2&&\\
&&\\
\left\langle F_x,F_y\right\rangle_{\sigma}+h_x.h_y=0&&\\
\end{array}
\label{conf}
\end{equation}
and the metric induced by the immersion is given by 
\begin{equation}
ds^2=\lambda^2(z)|dz|^2=(|F_z|_\sigma+|F_{\bar{z}}|_\sigma)^2|dz|^2.
\label{metric}
\end{equation}

Considering the universal covering $\pi:\hr\rightarrow\mathbb H^2/[\psi]\times \mathbb S^1$ we can take $\widetilde \Sigma,$ a connected component of the lift of $\Sigma$ to $\hr,$ and we have $\widetilde{X}=(\widetilde{F}, \widetilde{h}): \widetilde{\Sigma}\rightarrow \hr$ such that $\pi(\widetilde{\Sigma})=\Sigma$ and $\widetilde{F}: \widetilde\Sigma \rightarrow \mathbb H^2, \widetilde{h}: \widetilde\Sigma \rightarrow \mathbb R$ are harmonic maps. We denote by $\widetilde{\partial_t}, \partial_t$ the vertical vector fields in $\hr$ and $\mathbb H^2/[\psi]\times \mathbb S^1,$ respectively. Observe that the functions $n_3: \Sigma \rightarrow \rr,$ $\tilde{n}_3: \widetilde\Sigma \rightarrow \rr,$ given by $n_3=\left\langle \partial_t, N \right\rangle, \tilde{n}_3=\left\langle \widetilde{\partial_t}, \widetilde{N} \right\rangle,$ where $N, \widetilde{N}$ are the unit normal vectors of $\Sigma, \widetilde\Sigma,$ respectively, satisfy $\tilde{n}_3=n_3\circ\pi.$ Then if we define the functions $\omega: \Sigma \rightarrow \rr, \tilde\omega: \widetilde\Sigma \rightarrow \rr$ so that $\mbox{tanh}(\omega)=n_3$ and $\mbox{tanh}(\tilde\omega)=\tilde{n}_3,$ we get  $\tilde\omega=\omega\circ\pi.$

As we consider $X$ a conformal minimal immersion, we have
\begin{equation}
n_3=\frac{|F_z|^2-|F_{\bar{z}}|^2}{|F_z|^2+|F_{\bar{z}}|^2}
\label{n3}
\end{equation}
and
\begin{equation}
\omega=\frac{1}{2}\ln\frac{|F_z|}{|F_{\bar{z}}|}.
\label{omegaln}
\end{equation}

Note that the same formulae are true for $\tilde{n}_3$ and $\tilde\omega.$

We know that for local conformal parameters $\tilde{z}$ on $\widetilde\Sigma,$ the holomorphic quadratic Hopf differential associated to $\widetilde{F}$, given by $$\widetilde{Q}(\widetilde{F})=(\sigma\circ \widetilde {F})^2\widetilde{F}_{\tilde{z}}\bar{\widetilde{F}}_{\tilde{z}}(d\tilde{z})^2,$$ can be written as $(\widetilde{h}_{\tilde{z}})^2(d\tilde{z})^2=-\widetilde{Q}$. Then, since $\widetilde{h}$ and $h$ differ by a constant in a neighborhood, $(h_z)^2(dz)^2=-Q$ is also a holomorphic quadratic differential on $\Sigma$ for local conformal parameters $z$ on $\Sigma.$ We note $Q$ has two square roots globally defined on $\Sigma.$ Writing $Q=\phi(dz)^2,$ we denote by $\eta=\pm2i\sqrt{\phi}dz$ a square root of $Q,$ where we choose the sign so that
$$
h=\mbox{Re}\ \int\eta.
$$
% and we have 
%$$
%h=-2\mbox{Re}\int{i\sqrt{\phi}dz}=2\mbox{Im}\int{\sqrt{\phi}dz}.
%$$

%%%%%Moreover $(\omega, Q)$ are defined on $\Sigma,$ 
Using (\ref{metric}), (\ref{omegaln}) and the definition of $Q,$ we have
\begin{equation}
ds^2=4(\mbox{cosh}^2\omega)|Q|.
\label{ds}
\end{equation}

%We know (see \cite{SY} page 9) the harmonic map $F$ satisfies the Bochner formula
%$$
%\Delta_0\log\frac{|F_z|}{|F_{\bar{z}}|}=2(|F_z|^2_\sigma-|F_{\bar{z}}|^2_\sigma),
%$$
%then using (\ref{omegaln}) and the definition of $Q,$ we have
As the Jacobi operator of the minimal surface $\Sigma$ is given by
$$
J=\frac{1}{4\cosh^2\omega|\phi|}\left[\Delta_0-4|\phi|+\frac{2|\nabla\omega|^2}{\cosh^2\omega}\right]
$$
and $Jn_3=0,$ then
\begin{equation}
\Delta_0\omega=2\sinh(2\omega)|\phi|,
\label{deltaomega}
\end{equation}
where $\Delta_0$ denotes the Laplacian in the Euclidean metric $|dz|^2$, that is, $\Delta_0=4\partial^2_{z\bar{z}}.$ %For more details, see section 2 in \cite{HR}.

The sectional curvature of the tangent plane to $\Sigma$ at a point $z$ is $-n_3^2$ and the second fundamental form is
$$
II=\frac{\omega_x}{\mbox{cosh}\omega}dx\otimes dx-\frac{\omega_x}{\mbox{cosh}\omega}dy\otimes dy+2\frac{\omega_y}{\mbox{cosh}\omega}dx\otimes dy.
$$

Hence, using the Gauss equation, the Gauss curvature of $(\Sigma, ds^2)$ is given by
\begin{equation}
K_{\Sigma}=-\mbox{tanh}^2\omega-\frac{|\nabla \omega|^2}{4(\mbox{cosh}^4\omega)|\phi|}.
\label{eqKK}
\end{equation}

\section{Main results}
In this section, besides prove the main theorem of this paper, we will firstly demonstrate some properties of an end when it is properly immersed in $\mathcal M_+$ or in $\mathcal M_-,$ which are interesting by theirselves.

We will write $[d(0),d(s)]$ to denote the slab $\cup_{0\leq t \leq s}d(t)$ in $\hr$ whose boundary is $d(0)\cup d(s).$

\begin{lemma}
There is no proper minimal end $E$ in $\mathcal M_+$ with $\partial \mathcal M_+\cap E=\partial E$ whose lift is an annulus in $\hr.$
\label{annulus}
\end{lemma}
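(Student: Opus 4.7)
The plan is to argue by contradiction. Suppose $E$ is such an end, with annular lift $\tilde E\subset\hr$.

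Because $\tilde E$ is a connected annulus and $E$ has one boundary component, both $\pi_1(\tilde E)$ and $\pi_1(E)$ are infinite cyclic; the deck group of $\pi|_{\tilde E}\colon\tilde E\to E$ is the quotient $\pi_1(E)/\pi_1(\tilde E)$, and is a subgroup of the torsion-free group $G\cong\mathbb{Z}^2$, hence trivial. Equivalently $\pi_1(E)\to G$ is the zero map and $\pi|_{\tilde E}$ is a homeomorphism, so $\tilde E$ may be placed inside a fundamental domain of $G$ on $\hr$. Consequently $\partial\tilde E$ is a single embedded circle in the plane $d(0)$ bounding a disk $D_0\subset d(0)$, and the transverse coordinates (vertical $t$ and the $\psi$-direction) are bounded on $\tilde E$. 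Properness of $E$ in $\mathcal{M}_+$ forces the $s$-coordinate to be proper on $\tilde E$, so the free end goes to $p_o$ at infinity of $\hh$ as $s\to\infty$.

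I would first invoke the strong maximum principle against the totally geodesic horizontal slices $\hh\times\{c\}$: an interior tangency with $\tilde E$ would force $\tilde E\subset\hh\times\{c\}$ by analytic continuation, but then $\partial\tilde E$ would be a genuine circle inside the one-dimensional set $c(0)\times\{c\}$, which is impossible. Hence $|n_3|<1$ in the interior of $\tilde E$, and the bounded harmonic function $\tilde h=t|_{\tilde E}$ extends harmonically across the puncture at the free end.

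The decisive step is a flux argument using the horocylinder foliation. For each $s_0>0$, close off the compact piece $\tilde E_{s_0}:=\tilde E\cap\{0\le s\le s_0\}$ by the planar disks $D_0\subset d(0)$ and $D_{s_0}\subset d(s_0)$ bounded by its two level circles, forming a topological sphere in $\hr$. For the Killing field $Y$ generating $\psi$: since $Y$ is tangent to every horocylinder $d(s)$, its flux through the capping disks vanishes, so by the divergence theorem the flux of $Y$ through $\tilde E_{s_0}$ is independent of $s_0$. But the fundamental $\psi$-strip has hyperbolic width $O(e^{-s})$ as $s\to\infty$, making $|Y|_\hh=O(e^{-s})$ while the level loops $\tilde E\cap d(s)$ have length uniformly bounded in $s$; letting $s_0\to\infty$ forces this conserved flux --- hence the flux across $\partial\tilde E$ --- to vanish. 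A parallel computation with $\partial_t$ yields a second flux identity.

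The main obstacle is converting these vanishing-flux relations into an outright contradiction, since no single vanishing flux is inconsistent by itself. The plan is to combine the flux data with the asymptotic behavior of the Hopf differential $Q$ at the free-end puncture (constrained by the removable-singularity extension of $\tilde h$) to force $Q\equiv 0$ on $\tilde E$. This would make $\tilde E$ a portion of a horizontal slice or a vertical geodesic plane, neither of which is compatible with $\partial\tilde E$ being a genuine circular boundary curve in $d(0)$, and the contradiction will close the argument.
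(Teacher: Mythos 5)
Your argument has three genuine gaps, one of which is a false step. First, the claim that because $\pi|_{\widetilde E}$ is injective the lift ``may be placed inside a fundamental domain of $G$,'' so that the vertical and $\psi$-coordinates are bounded on $\widetilde E$, is not correct: injectivity of the projection does not confine the lift to a fundamental domain (compare a line of irrational slope in $\mathbb R^2$, which injects into the quotient cylinder but meets every translate of a fundamental strip). In particular the boundedness of $\widetilde h$ on $\widetilde E$ is not free; it is one of the main points the paper has to establish, and it does so with two separate barrier arguments: sweeping vertical geodesic planes $\overline{pq}\times\mathbb R$ to trap $\widetilde E$ in a wedge with ideal vertex $p_o$, and then comparing with minimal graphs taking the value $+\infty$ on a geodesic $\gamma$ and $M$ on an ideal arc to trap $\widetilde E$ in the slab $|\widetilde h|\le M$. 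Second, your flux computation needs the level loops $\widetilde E\cap d(s)$ to have length uniformly bounded in $s$; nothing gives you this (note the lemma carries no total curvature or area hypothesis, and the paper explicitly remarks that none is used). Third, and decisively, your last paragraph is a plan rather than a proof: you acknowledge that no single vanishing flux yields a contradiction and only sketch a hope of forcing $Q\equiv 0$, so the argument never closes.

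For comparison, the paper's contradiction comes from a different device altogether. After trapping $\widetilde E$ in the wedge and in a horizontal slab, it invokes the Collin--Rosenberg construction of a complete minimal graph $\mathcal A$ over an ideal quadrilateral $p_1p_2p_4p_3$, equal to $0$ on $\overline{p_1p_2}$ and $\overline{p_3p_4}$ and $+\infty$ on the other two sides, positioned so that the quadrilateral contains the wedge. Translating $\mathcal A$ vertically far above $\widetilde E$ and pushing it down, the maximum principle at the first point of contact forces $\mathcal A=\widetilde E$, which is absurd. If you want to salvage your approach, you would need to (i) prove, not assume, the confinement of $\widetilde E$ (the paper's barrier arguments are the natural route), and (ii) replace the inconclusive flux discussion with some such sweeping-barrier contradiction.
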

\begin{proof}

Let us prove it by contradiction. Suppose we have a proper minimal end $E$ in $\mathcal M_+$ with $\partial \mathcal M_+\cap E=\partial E$ whose lift $\widetilde E$ is a proper minimal annulus in $\hr.$ Hence $\partial \widetilde E\subset d(0)$, $\widetilde E\subset \bigcup_{s\geq0}d(s)$ and $\widetilde{E}\cap d(s)\neq\emptyset$ for any $s,$ where $d(s)=c(s)\times\rr,$ $c(s)$ horocycle tangent at infinity to $p_o.$ 

Choose $p\neq p_o\in\partial_\infty \hh$ such that $(\overline{pp_o}\times\rr)\cap\partial\widetilde {E}=\emptyset.$

%As the end $E$ is proper, $\widetilde E\cap[d(0),d(s)]$ is compact with boundary on $d(0)\cup d(s),$ for any $s$.

Now consider $q\in\partial_\infty\hh$ contained in the halfspace determined by $\overline{pp_o}\times\rr$ that does not contain $\partial\widetilde E$ such that $(\overline{pq}\times\rr)\cap d(0)=\emptyset.$ Let $q$ go to $p_o.$ If there exists some point $q_1$ such that $(\overline{pq_1}\times\rr)\cap \widetilde E\neq\emptyset,$ then, as $p,q_1\notin d(s)$ for any $s$, and $E$ is proper, that intersection is a compact set in $\widetilde E.$ Therefore, when we start with $q$ close to $p$ and let $q$ go to $q_1,$ there will be a first contact point between $\overline{pq_0}\times\rr$ and $\widetilde E,$ for some point $q_0.$ By the maximum principle this yields a contradiction. Therefore, we conclude that $\overline{pp_0}\times\rr$ does not intersect $\widetilde E.$ Choosing another point $\bar{p}$ in the same halfspace determined by $\overline{pp_o}\times\rr$ as $\widetilde E$ such that $(\overline{\bar{p}p_o}\times\rr)\cap\partial\widetilde {E}=\emptyset,$ we can use the same argument above and conclude that $\widetilde{E}$ is contained in the region between $\overline{pp_o}\times\rr$ and $\overline{\bar{p}p_o}\times\rr.$ Call $\alpha=\overline{pp_o}$ and $\bar{\alpha}=\overline{\bar{p}p_o}.$

\begin{figure}[h]
 \centering
\includegraphics[height=4cm]{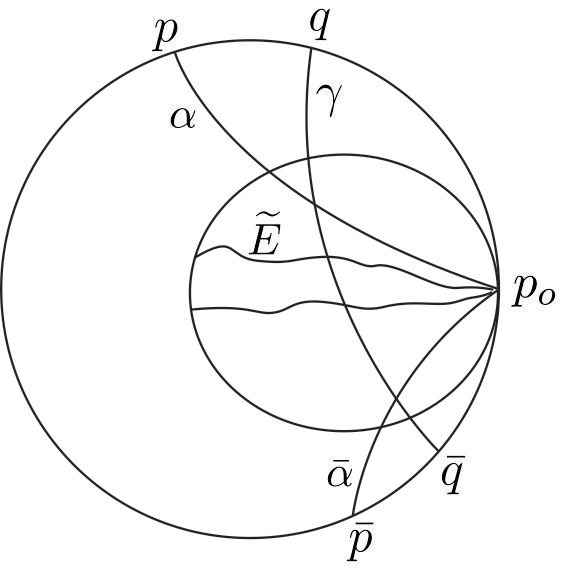}
\caption{Curve $\gamma$.}
\label{gamma1}
\end{figure}

Now consider a horizontal geodesic $\gamma$ with endpoints $q,\bar{q}$ such that $q$ is contained in the halfspace determined by $\alpha\times\rr$ that does not contain $\widetilde E,$ and $\bar{q}$ is contained in the halfspace determined by $\bar{\alpha}\times\rr$ that does not contain $\widetilde E$ (see Figure \ref{gamma1}). Up to translation, we can suppose $\widetilde E\cap(\gamma\times\rr)\neq\emptyset.$ As $E$ is proper, the part of $\widetilde E$ between $\partial\widetilde E$ and $\widetilde E\cap\left( \gamma\times\rr\right)$ is compact, then there exists $M\in\rr$ such that the function $\widetilde h$ restrict to this part satisfies $-M\leq \widetilde h\leq M.$ Consider the function $v$ that takes the value $+\infty$ on $\gamma$ and take the value $M$ on the asymptotic arc at infinity of $\hh$ between $q$ and $\bar{q}$ that does not contain $p_o$. The graph of $v$ is a minimal surface that does not intersect $\widetilde E.$ When we let $q,\bar{q}$ go to $p_o$ we get, using the maximum principle, $\widetilde E$ is under the graph of $v$ and then $\widetilde h|_{\widetilde E}$ is bounded above by $M$, since $v$ converges to the constant function $M$ uniformly on compact sets as $q,\bar{q}$ converge to $p_o$ (see section B, \cite{MMR}). Using a similar argument, we can show that $\widetilde h|_{\widetilde E}$ is also bounded below by $-M$. Therefore $\widetilde E$ is an annulus contained in the region bounded by $\alpha\times\rr, \bar{\alpha}\times\rr, \hh\times\{-M\}$ and $\hh\times\{M\}.$

Take four points $p_1,p_2,p_3,p_4\in\partial_{\infty}\hh$ such that $p_1,p_2$ is contained in the halfspace determined by $\alpha\times\rr$ that does not contain $\widetilde E,$ and $p_3,p_4$ is contained in the halfspace determined by $\bar{\alpha}\times\rr$ that does not contain $\widetilde E$. Moreover, choose these points so that there exists a complete minimal surface $\mathcal A$ taking value $0$ on $\overline{p_1p_2}$ and $\overline{p_3p_4},$ and taking value $+\infty$ on $\overline{p_2p_4}$ and $\overline{p_1p_3}$ (see Figure \ref{figA}). This minimal surface exists by \cite{CR}. 

\begin{figure}[h]
 \centering
\includegraphics[height=4cm]{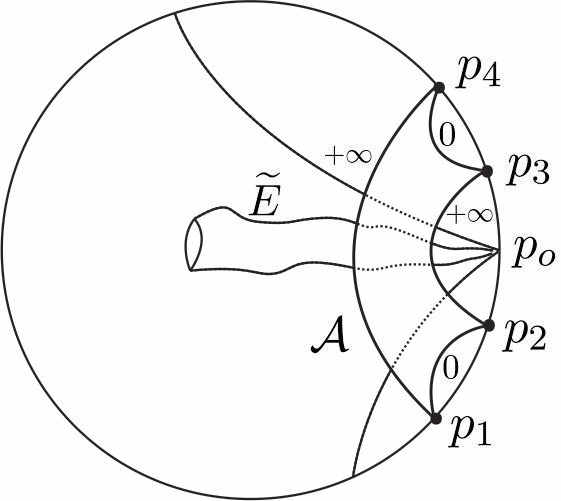}
\caption{Minimal graph $\mathcal A$.}
\label{figA}
\end{figure}

Up to a vertical translation, $\mathcal A$ does not intersect $\widetilde E$ and $\mathcal A$ is above $\widetilde E.$ Pushing down $\mathcal A$ (under vertical translation) and using the maximum principle, we conclude that $\mathcal A=\widetilde E,$ what is impossible. 
\end{proof}

\begin{remark}
We do not use any assumption on the total curvature of the end to prove the previous lemma.
\end{remark}

\begin{lemma}
\label{prop-area}
If a proper minimal end $E$ with finite total curvature is contained in $\mathcal M_-$, then $E$ has bounded curvature and infinite area.
\end{lemma}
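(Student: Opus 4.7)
The plan is to prove each conclusion separately, exploiting that $\mathcal{M}_-$ has uniformly bounded geometry: the ambient sectional curvatures lie in $[-1,0]$ and the injectivity radius of $\mathcal{M}_-$ is bounded below by a positive constant (indeed, in the hyperbolic case, it tends to infinity at the end).

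\textbf{Bounded curvature} is obtained by contradiction. If $|A|$ were not bounded on $E$, pick $p_n\in E$ with $\lambda_n:=|A|(p_n)\to\infty$; since $E$ is smoothly properly immersed, $|A|$ is bounded on every compact subset of $\mathcal{M}_-$, so $p_n$ must leave every compact set. After a standard point-picking step ensuring that $p_n$ is an approximate local maximum of $|A|$ on a ball of radius $\lambda_n^{-1/2}$, rescale the ambient metric by $\lambda_n^2$ at $p_n$. The bounded ambient curvature and positive injectivity radius force the rescaled ambient balls to converge in $C^\infty$ to flat $\mathbb{R}^3$, and the rescaled minimal surfaces $\Sigma_n$, which satisfy $|A_{\Sigma_n}|(p_n)=1$ and have uniformly bounded curvature on compact sets, converge subsequentially to a complete non-flat minimal surface $\Sigma_\infty\subset \mathbb{R}^3$. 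The finite total curvature hypothesis now yields a contradiction via the scale-invariant identity
$$\int_{\Sigma_n\cap B_R(0)} K\,dA \;=\; \int_{E\cap B_{R/\lambda_n}(p_n)} K_E\,dA_E,$$
whose right-hand side tends to $0$ because $p_n\to\infty$ in $\mathcal{M}_-$ and $\int_E |K_E|\,dA_E<\infty$. Passing to the limit, $\int_{\Sigma_\infty\cap B_R(0)} K\,dA = 0$ for every $R$; since $K_{\Sigma_\infty}\le 0$, the surface $\Sigma_\infty$ is totally geodesic, contradicting $|A_{\Sigma_\infty}|(0)=1$.

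\textbf{Infinite area} follows once bounded curvature is known. By the monotonicity formula for minimal surfaces in a manifold of bounded geometry, there exist uniform constants $c_0,r_0>0$ with $\mathrm{Area}(E\cap B_{r_0}(p))\ge c_0$ for every $p\in E$. Since $E$ is proper and non-compact in $\mathcal{M}_-$, I select a sequence $p_n\in E$ escaping every compact set and $2r_0$-separated in the ambient, so that the balls $B_{r_0}(p_n)$ are pairwise disjoint; the separation is possible because the injectivity radius of $\mathcal{M}_-$ is bounded below. Summing the lower bounds yields $\mathrm{Area}(E) = +\infty$.

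The main technical obstacle is the rescaling step in the bounded curvature proof: one must verify that the rescaled ambient spaces $(\mathcal{M}_-,\lambda_n^2 g)$ converge smoothly to $\mathbb{R}^3$ and that the rescaled minimal surfaces admit a smooth subsequential limit that is complete and non-flat. Both rely on the uniform positive lower bound on the injectivity radius of $\mathcal{M}_-$, which is precisely what distinguishes it from $\mathcal{M}_+$ and where the analogous conclusion could fail. The point-picking step must also be carried out so that the chosen $p_n$ simultaneously serves as an approximate curvature maximum and sits in a region where the rescaled ambient already looks almost Euclidean.
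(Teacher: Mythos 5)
Your proposal is correct and its skeleton is the same as the paper's: for bounded curvature, a point-picking/blow-up argument at a sequence of divergent points of unbounded $|A|$, using the positive injectivity radius of $\mathcal M_-$ to produce a complete non-flat minimal limit surface in flat $\mathbb R^3$; for infinite area, a sum of uniform lower area bounds over disjoint balls centered at a divergent sequence. The two arguments diverge only in how each contradiction is closed. In the curvature step the paper invokes Osserman's theorem to get $\int_{\widetilde\Sigma}|A|^2\geq 4\pi$ for the limit, then uses the scale-invariance of $\int|A|^2$ together with the pairwise disjointness of the rescaled balls $B_n$ to force $\int_E|A|^2=+\infty$, contradicting $\int_E|A|^2\leq -2\int_E K_E<\infty$ from the Gauss equation. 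You instead use the scale-invariance of $\int K\,dA$ and the absolute continuity of the finite measure $|K_E|\,dA_E$ to show the limit has vanishing total curvature on every ball, hence (being a minimal surface in $\mathbb R^3$ with $K\leq 0$) is a plane, contradicting $|A|(0)=1$. Your route is slightly more economical: it needs only one sequence of balls and avoids Osserman, at the cost of a small amount of care in passing the curvature integral to the limit (multiplicity and boundary effects, which are harmless here since all the Gauss curvatures involved are nonpositive and the upper bounds tend to zero). In the area step the paper uses the already-established bounded curvature to write small intrinsic balls as graphs of bounded geometry and compares their area with Euclidean disks, whereas you appeal to the monotonicity formula in a space of bounded geometry; your version does not even require the bounded curvature conclusion, which makes the two halves of the lemma logically independent. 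Both variants are sound.
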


\begin{proof}

Suppose $E$ does not have bounded curvature. Then there exists a divergent sequence $\{p_n\}$ in $E$ such that $|A(p_n)|\geq n,$ where $A$ denotes the second fundamental form of $E.$ As the injectivity radius of $\mathcal M_-$ is strictly positive, there exists $\delta>0$ such that for all $n,$ the exponential map exp$_{\mathcal M}: D(0,\delta)\subset T_{p_n}\mathcal M\rightarrow B_{\mathcal M}(p_n,\delta)$ is a diffeomorphism, where  $B_{\mathcal M}(p_n,\delta)$ is the extrinsic ball of radius $\delta$ centered at $p_n$ in $\mathcal M.$ Without loss of generality, we can suppose $B_{\mathcal M}(p_n,\delta)\cap B_{\mathcal M}(p_k,\delta)=\emptyset.$

The properness of the end implies the existence of a curve $c\subset E$ homotopic to $\partial E$ such that every point in the connected component of $E\setminus c$ that does not contain $\partial E$ is at a distance greater than $\delta$ from $\partial E.$ Call $E_1$ this component. Hence each point of $E_1$ is the center of an extrinsic ball of radius $\delta$ disjoint from $\partial E.$

Denote by $C_n$ the connected component of $p_n$ in $B_{\mathcal M}(p_n,\delta)\cap E_1$ and consider the function $f_n:C_n\rightarrow\rr$ given by 
$$
f_n(q)=d(q,\partial C_n)|A(q)|,
$$
where $d$ is the extrinsic distance.

The function $f_n$ restricted to the boundary is identically zero and $f_n(p_n)=\delta|A(p_n)|>0.$ Then $f_n$ attains a maximum in the interior. Let $q_n$ be such maximum. Hence $\delta|A(q_n)|\geq d(q_n,\partial C_n)|A(q_n)|=f_n(q_n)\geq f_n(p_n)=\delta|A(p_n)|\geq \delta n,$ what yields $|A(q_n)|\geq n.$

Now consider $r_n=\frac{d(q_n,\partial C_n)}{2}$ and denote by $B_n$ the connected component of $q_n$ in $B_{\mathcal M}(q_n,r_n)\cap E_1.$ We have $B_n\subset C_n.$ If $q\in B_n,$ then $f_n(q)\leq f_n(q_n)$ and
$$
\begin{array}{rcl}
d(q_n, \partial C_n)&\leq & d(q_n,q) + d(q, \partial C_n)\\
&&\\
&\leq & \frac{d(q_n,\partial C_n)}{2} +d(q, \partial C_n)\\
&&\\
\Rightarrow d(q_n, \partial C_n) &\leq & 2d(q,\partial C_n),

\end{array}
$$
hence we conclude that $|A(q)|\leq 2|A(q_n)|.$

Call $g$ the metric on $E$ and take $\lambda_n=|A(q_n)|.$ Consider ${\Sigma}_n$ the homothety of $B_n$ by $\lambda_n,$ that is, $\Sigma_n$ is the ball $B_n$ with the metric $g_n=\lambda_n g.$ We can use the exponential map at the point $q_n$ to lift the surface $\Sigma_n$ to the tangent plane $T_{q_n}\mathcal M\approx \rr^3,$ hence we obtain a surface $\widetilde{\Sigma}_n$ in $\rr^3$ which is a minimal surface with respect to the lifted metric $\tilde{g}_n,$ where $\tilde{g}_n$ is the metric such that the exponential map exp$_{q_n}$ is an isometry from $(\widetilde{\Sigma}_n, \tilde{g}_n)$ to $(\Sigma_n,g_n).$

%Consider the homothety $(\Sigma_n,g_n)=(B_n, \lambda_ng).$ Using exponential normal coordinates we can consider $(\widetilde{\Sigma}_n,\tilde{g}_n)$ a $\tilde{g}_n$-minimal surface contained in $\rr^3,$ where $\mbox{exp}_{q_n}(\widetilde{\Sigma}_n)=\Sigma_n, \tilde{g}_n=(\mbox{exp}_{q_n})^*(g_n),$ that is, $\mbox{exp}_{q_n}$ is an isometry from $(\widetilde{\Sigma}_n,\tilde{g}_n)$ to $(\Sigma_n,g_n).$   

We have $\widetilde{\Sigma}_n\subset B_{\rr^3}(0,\lambda_nr_n),$ $|A(0)|=1$ and $|A(q)|\leq2$ for all $q\in\widetilde{\Sigma}_n.$ 

Note that $2\lambda_nr_n=f_n(q_n)\geq f_n(p_n)\geq\delta n,$ hence $\lambda_nr_n\rightarrow+\infty$ as $n\rightarrow\infty.$

Fix $n.$ The sequence $\left\{\widetilde{\Sigma}_k\cap B_{\rr^3}(0,\lambda_nr_n)\right\}_{k\geq n}$ is a sequence of compact surfaces in $\rr^3,$ with bounded curvature, passing through the origin and the metric $g_k$ converges to the canonical metric $g_0$ in $\rr^3.$ Then a subsequence converges to a minimal surface in $(\rr^3,g_0)$ passing through the origin with the norm of the second fundamental form at the origin equal to 1. We can apply this argument for each $n$ and using the diagonal sequence argument, we obtain a complete minimal surface $\widetilde{\Sigma}$ in $\rr^3,$ with $0\in \widetilde{\Sigma}$ and $|A(0)|=1.$ In particular, $\widetilde{\Sigma}$ is not the plane. Then by Osserman's theorem \cite{O} we know $\int_{\widetilde{\Sigma}}| A|^2\geq 4\pi.$ 

We know that the integral $\int_\Sigma |A|^2$ is invariant by homothety of $\Sigma$, hence
$$
\int_{B_n} |A|^2=\int_{\Sigma_n}|A|^2=\int_{\widetilde{\Sigma}_n}|A|^2.
$$

Consider a compact $K\subset\widetilde{\Sigma}$ sufficiently large so that $\int_K|A|^2\geq2\pi.$ Fix $n$ such that $K\subset B(0,\lambda_nr_n).$ As a subsequence of $\widetilde{\Sigma}_k\cap B_{\rr^3}(0,\lambda_nr_n)$ converges to $\widetilde{\Sigma}\cap B_{\rr^3}(0,\lambda_nr_n),$ we have for $k$ sufficiently large that
$$
\int_{\widetilde{\Sigma}_k\cap B(0,\lambda_nr_n)}|A|^2\geq 2\pi-\epsilon,
$$ 
for some small $\epsilon.$ It implies $\int_{B_k}|A|^2\geq 2\pi-\epsilon,$ for $k$ sufficiently large. As $B_i\cap B_j=\emptyset,$ we conclude that $\int_{E}|A|^2=+\infty.$ But this is not possible, since 
$$
\int_{E}|A|^2=\int_{E}-2K_E+2K_{\mbox{sec}_{\mathcal M}(E)}\leq -2\int_E K_E<+\infty. 
$$

Therefore, $E$ has necessarily bounded curvature. 

Since $E$ is complete, there exist $\epsilon>0$ and a sequence of points $\{p_n\}$ in $E$ such that $p_n$ diverges in $\mathcal M_-$ and $B_E(p_k,\epsilon)\cap B_E(p_j,\epsilon)=\emptyset,$ where $B_E(p_k,\epsilon)\subset E$ is the intrinsic ball centered at $p_k$ with radius $\epsilon.$
As $E$ has bounded curvature, then there exists $\tau<\epsilon$ such that $B_E(p_k,\tau)$ is a graph with bounded geometry over a small disk $D(0,\tau)$ of radius $\tau$ in $T_{p_k}E,$ and the area of $B_E(p_k,\tau)$ is greater or equal to the area of $D(0,\tau).$ Therefore,
$$\mbox{area}(E)\geq \sum_{n\geq1}\mbox{area}\left(B_E(p_n,\tau)\right)=\infty.$$

%Let $\lambda_n=\sqrt{K(p_n)}$ and denote by $B_n$ the homothety $\lambda_n(B_\delta(p_n)\cap E_1).$ If $\delta$ is sufficiently small, $B_n$ converges to a complete minimal surface $S$ that contains a point with Gauss curvature equal to 1 and we can see $S$ as a surface in $\mathbb R^3.$

%We know that $\int |A|^2$ is invariant by homothety and 
%$$\int_{B_\delta\cap E_1}|A|^2=-2\int_{B_\delta\cap E_1}K\rightarrow -2\int_SK_S.$$

%As $\int_E|K| < \infty$ we can choose $\delta$ sufficiently small so that $\int_{B_\delta\cap E_1}|K|<\frac{2\pi}{3}.$ Hence $-2\int_{B_\delta\cap E_1}K<\frac{4\pi}{3}$ and then $\left|\int_SK_S\right|<2\pi,$ what implies that $S$ is a plan, by a classical result by Osserman \cite{O}. But this is impossible because $K(p)=1$ for some $p\in S.$ Therefore the Gauss curvature of $E$ is bounded and the proposition is proved. 
\end{proof}

%\begin{proposition}
%If $E\subset\mathcal M$ is a complete minimal end with finite total curvature and infinite area, then the Gauss curvature goes to zero at infinity.
%\end{proposition}

%\begin{proof}
%Suppose this is not true. Then there exist a sequence $\{p_n\}, p_n\in E$ and a positive constant $c\in\rr$ such that $p_n\rightarrow\infty$ and $|K(p_n)|\geq c.$ Since the area of $E$ is infinite, there exists $\epsilon>0$ such that $B(p_k,\epsilon)\cap B(p_j,\epsilon)=\emptyset$ and $\sum_{n=1}^{\infty}\mbox{area}B(p_n,\epsilon)=\infty.$ Hence, we have
%$$
%\infty=\sum_{n=1}^{\infty}\int_{B(p_n,\epsilon)}c \ d\sigma \leq \sum_{n=1}^{\infty}\int_{B(p_n,\epsilon)}|K|d\sigma\leq \int_E|K| d\sigma,
%$$
%what yields a contradiction.
%\end{proof}

\begin{definition}
We write \textit{Helicoidal plane} to denote a minimal surface in $\hr$ which is parametrized by $X(x,y)=(x,y,ax+b)$ when we consider the halfplane model for $\hh.$ 
\end{definition}

Now we can state the main result of this paper.

\begin{theorem}
Let $X: \Sigma \hookrightarrow \mathcal M = \hr/[\psi, T(h)]$ be a properly immersed minimal surface with finite total curvature. Then 
\begin{enumerate}
\item $\Sigma$ is conformally equivalent to a compact Riemann surface $\overline{M}$ with genus $g$ minus a finite number of points, that is, $\Sigma=\overline{M}\setminus \{p_1,..., p_k\}$.
\item The total curvature satisfies
$$
\int_{\Sigma}Kd\sigma=2\pi(2-2g-k).
$$
\item If we parametrize each end by a punctured disk then either $Q$ extends to zero at the origin (in the case where the end is asymptotic to a horizontal slice) or $Q$ extends meromorphically to the puncture with a double pole and  residue zero.
In this last case, the third coordinate satisfies $h(z)=b{\rm arg}(z)+O(|z|)$ with $b \in \mathbb R$.
\item The ends contained in $\mathcal M_-$ are necessarily asymptotic to a vertical plane $\gamma\times\mathbb S^1$ and the ends contained in $\mathcal M_+$ are asymptotic to either
\begin{itemize}
\item a horizontal slice $\hh/[\psi]\times\{c\},$ or
\item a vertical plane $\gamma\times\mathbb S^1$, or 
\item the quotient of a Helicoidal plane.
 \end{itemize}
 \end{enumerate}
\end{theorem}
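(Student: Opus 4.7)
The plan is to proceed in three stages, following the order of the statements, with the detailed analysis of the Hopf differential $Q$ as the technical heart.

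\medskip
\textbf{Stage 1 (Part 1).} The proper immersion $X:\Sigma\to \mathcal M$ into the complete manifold $\mathcal M$ endows $\Sigma$ with a complete induced metric. From (\ref{eqKK}) one reads $K_\Sigma\le -\tanh^2\omega\le 0$; together with the finite total curvature hypothesis, a direct application of Huber's theorem realizes $\Sigma$ conformally as $\overline M\setminus\{p_1,\ldots,p_k\}$ with $\overline M$ a compact Riemann surface of some genus $g$.

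\medskip
\textbf{Stage 2 (Parts 3 and 2).} Parametrize a neighborhood of each puncture by a punctured disk $D^*=\{0<|z|<1\}$, so that $Q=\phi(dz)^2$ is holomorphic on $D^*$. I plan to show $Q$ extends meromorphically to $0$ with at most a double pole by combining the finite total curvature condition (recast via (\ref{ds}) and (\ref{eqKK}) as $L^1$ bounds on $\sinh^2\omega\,|\phi|$ and $|\nabla\omega|^2/\cosh^2\omega$) with the PDE (\ref{deltaomega}) to bound the growth of $|\phi|$ near $0$. Writing $\phi=a/z^2+b/z+\text{(holomorphic)}$ and $\eta=\pm 2i\sqrt{\phi}\,dz$, the requirement that $h=\mathrm{Re}\int\eta$ descend to an $\mathbb S^1$-valued map, together with the absence of a $\log|z|$ divergence (forced by properness in $\mathcal M$), gives $a\ge 0$ real and $b=0$. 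The case $a=0$ yields $Q\to 0$ at the puncture; the case $a>0$ yields $h(z)=b\arg z+O(|z|)$ with $b=-2\sqrt a\in\mathbb R$, establishing Part 3. Part 2 then follows by Gauss--Bonnet on $\Sigma_r:=\Sigma$ minus small parameter-disks around each puncture: the asymptotic form obtained above shows that the geodesic-curvature integral around each small circle tends to $2\pi$ as $r\to 0$, so passing to the limit gives $\int_\Sigma K\,d\sigma=2\pi\chi(\Sigma)=2\pi(2-2g-k)$.

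\medskip
\textbf{Stage 3 (Part 4) and main obstacle.} For an end $E\subset \mathcal M_-$, Lemma \ref{prop-area} provides bounded curvature and infinite area; translating $E$ by integer powers of the horizontal generator of $G$ and extracting a smooth subsequential limit via the curvature bound produces a surface invariant under an additional one-parameter subgroup, and properness together with finite total curvature force this limit to be a vertical geodesic plane $\gamma\times\mathbb S^1$. For an end $E\subset \mathcal M_+$, Lemma \ref{annulus} excludes annular lifts, so we split cases according to the output of Stage 2: if $Q\to 0$, then $dh\to 0$ and properness forces $h\to c$, giving the horizontal slice $\mathbb H^2/[\psi]\times\{c\}$; if $Q$ has a double pole with residue zero, the expansion $h(z)=b\arg z+O(|z|)$ is refined by analyzing the horizontal projection $F$, which either stabilizes on a geodesic (vertical plane $\gamma\times\mathbb S^1$) or escapes along horocycles (Helicoidal plane). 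The hard part will be Stage 2 --- converting the integral curvature bound into the sharp growth estimate on $\phi$ and justifying the residue-zero condition --- together with the discrimination between the Helicoidal-plane and vertical-plane cases in $\mathcal M_+$, where one must extract the precise horizontal asymptotics of $F$ from the Hopf differential and from the ambient geometry of $\mathcal M_+$.
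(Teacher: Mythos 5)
Your outline shares the paper's skeleton (Huber for Part 1, meromorphic extension of $Q$ for Part 3, Gauss--Bonnet plus end-by-end asymptotics for Part 2, case analysis for Part 4), but two of your key steps would fail as stated. First, in Stage 2 you claim that finite total curvature, via $L^1$ bounds and the equation $\Delta_0\omega=2\sinh(2\omega)|\phi|$, forces $Q$ to extend with \emph{at most a double pole}. That is not true: finite total curvature only gives the meromorphic extension (Osserman), and in $\hr$ itself there are finite total curvature ends (Scherk-type ends over ideal polygons) whose Hopf differential has a pole of order greater than two. In the paper the extension first takes the form $\phi=(\sum_{j\ge1}a_{-j}z^{-j}+P(z))^2$ with $P$ a polynomial, and the exclusion of $P\not\equiv0$ (Claim 2) and of the $\log|z|$ term (Case 2.2) are both \emph{properness-in-the-quotient} arguments: one integrates $|\widetilde F_v|=|\sinh\tilde\omega|$ along suitable level curves of $\widetilde h$ and shows the projection has finite length, contradicting properness. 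These length estimates in turn rest on the paper's Claim 1 --- the uniform decay $|\omega|\le K_0/\cosh r\to0$ for infinite-area ends, obtained from sub/supersolution comparisons for the $\sinh$-Gordon equation \`a la Han--Minsky--Wan. This decay estimate is the technical heart of the whole proof and is entirely absent from your plan; without it neither the pole-order restriction nor the later $C^2$-convergence of the ends can be established.

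Second, your Gauss--Bonnet step is internally inconsistent: with $\chi(\Sigma_r)=2-2g-k$ fixed as $r\to0$, the identity $\int_{\Sigma_r}K+\int_{\partial\Sigma_r}k_g=2\pi\chi(\Sigma_r)$ yields $\int_\Sigma K=2\pi(2-2g-k)$ only if the boundary term tends to $0$, not to $2\pi$ per circle (the value $2\pi$ reflects the $\rr^3$ picture of planar ends and would give $2\pi(2-2g-2k)$ here). Moreover, proving that this boundary term vanishes is not a consequence of the asymptotic form of $Q$ alone: the paper first shows each end converges in the $C^2$-topology to a horizontal slice, a vertical plane, or a quotient of a Helicoidal plane (using the Trapping Theorem of \cite{CHR2}, Lemma \ref{annulus} to kill the null-homotopic case, the homology class $n[e_1]+m[e_2]$ of the end in $\pi_1(\tor(0))$, and the geodesic-curvature formula $k_g=-\tilde\omega_v/\cosh\tilde\omega$), and only then deduces $\int_{\alpha_s}k_g\to0$. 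So Part 2 cannot precede the asymptotic analysis of Part 4 as your staging suggests. Your Stage 3 treatment of $\mathcal M_-$ ends (translate by the horizontal generator and extract a limit invariant under a one-parameter group) is also not how the paper proceeds and is under-justified: a subsequential limit is only invariant under the discrete translation, and identifying it as a vertical plane still requires the trapping-between-two-vertical-planes argument that the paper carries out in Subcase 2.1.2.
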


\begin{proof}
The proof of this theorem uses arguments of harmonic diffeomorphisms theory as can be found in the work of Han, Tam, Treibergs and Wan \cite{ZCH, HTTW, W} and Minsky \cite{M}.

From a result by Huber \cite{Hu}, we deduce that $\Sigma$ is conformally a compact Riemann surface $\overline{M}$ minus a finite number of points $\{p_1,...,p_k\},$ and the ends are parabolic. 

We consider $\overline{M}^*=\overline{M}-\cup_{i}B(p_i,r_i),$ the surface minus a finite number of disks removed around the punctures $p_i.$ As the ends are parabolic, each punctured disk $B^*(p_i,r_i)$ can be parametrized conformally by the exterior of a disk in $\mathbb C,$ say $U=\{z\in \mathbb C; |z|\geq R_0\}.$

Using the Gauss-Bonnet theorem for $\overline{M}^*,$ we get
\begin{equation}
\int_{\overline{M}^*}Kd\sigma+\sum_{i=1}^k\int_{\partial B(p_i,r_i)}k_g\ ds=2\pi(2-2g-k).
\label{eqGB}
\end{equation}

Therefore, in order to prove the second item of the theorem is enough to show that for each $i,$ we have $$\int_{\partial B(p_i,r_i)}k_g\ ds=\int_{B(p_i,r_i)}Kd\sigma.$$ In other words, we have to understand the geometry of the ends. Let us analyse each end.

Fix $i,$ denote $E=B^*(p_i,r_i)$ and let $X=(F,h):U=\{|z|\geq R_0\}\rightarrow \mathbb H^2/[\psi]\times \mathbb S^1$ be a conformal parametrization of the end $E.$ In this parameter we express the metric as $ds^2=\lambda^2|dz|^2$ with $\lambda^2=4(\mbox{cosh}^2\omega)|\phi|,$ where $\phi (dz)^2=Q$ is the holomorphic quadratic differential on the end.  
%Using a result by Huber ???, we conclude that a minimal end $E$ with finite total curvature is parabolic, then it can be parametrized by $U:=\{z\in\mathbb C; |z|>1\}.$ Let 

If $Q\equiv 0$ then $\phi\equiv 0$ and $h\equiv$ constant, what yields that the end $E$ of $\Sigma$ is contained in some slice $\hh/[\psi]\times\{c_0\}.$ Then, in fact, the minimal surface $\Sigma$ is the slice $\hh/[\psi]\times\{c_0\}.$ Note that by our hypothesis on $\Sigma$ this case is possible only when the horizontal slices of $\mathcal M$ have finite area. Therefore, we can assume $Q\not\equiv 0.$ %But this is not possible once the horizontal slices in the quotient space $\mathcal M$ do not have finite total curvature. Therefore, $Q\not\equiv0.$
% by $(\ref{eq1})$ area$(E)<\infty$ and by Proposition \ref{prop-area} we have $E\subset \mathcal M_-.$ Therefore 

%From now on, we assume $Q\not\equiv 0.$

Following the ideas of \cite{HTTW} and section 3 of \cite{HR}, we can show that finite total curvature and non-zero Hopf differential $Q$ implies that $Q$ has a finite number of isolated zeroes on the surface $\Sigma.$ Moreover, for $R_0>0$ large enough we can show that there is a constant $\alpha$ such that $(\cosh^2\omega)|\phi|\leq |z|^\alpha |\phi|$ and then, as the metric $ds^2$ is complete, we use a result by Osserman \cite{O} to conclude that $Q$ extends meromorphically to the puncture $z=\infty.$ Hence we can suppose that $\phi$ has the following form:
$$
\phi(z)=\left(\sum_{j\geq 1}\frac{a_{-j}}{z^j}+P(z)\right)^2,
$$
for $|z|>R_0$, where $P$ is a polynomial function.

Since $\phi$ has a finite number of zeroes on $U,$ we can suppose without loss of generality that $\phi$ has no zeroes on $U,$ and then the minimal surface $E$ is transverse to the horizontal sections $\hh/[\psi]\times\{c\}.$ 

%If we choose $\sqrt{\phi}=\sum_{j\geq 1}\frac{a_{-j}}{z^j}+P(z)$, then $h=2\mbox{Im}\int_U(\sum_{j\geq 1}\frac{a_{-j}}{z^j}+P(z))dz.$

As in a conformal parameter $z,$ we express the metric as $ds^2=\lambda^2|dz|^2,$ where $\lambda^2=4(\mbox{cosh}^2\omega)|\phi|,$ then on $U$
\begin{equation}
-K_{\Sigma}\lambda^2=4(\sinh^2\omega)|\phi|+\displaystyle\frac{|\nabla\omega|^2}{\cosh^2\omega}\geq 0.
\label{eqK}
\end{equation}

Hence,
$$
\begin{array}{rcl}
-\displaystyle\int_UKdA&=&\displaystyle\int_U4(\sinh^2\omega)|\phi||dz|^2+\int_U\frac{|\nabla\omega|^2}{\mbox{cosh}^2\omega}|dz|^2\\
&&\\
&=& \displaystyle\int_U4(\mbox{cosh}^2\omega)|\phi||dz|^2-\int_U4|\phi||dz|^2+\int_U\frac{|\nabla\omega|^2}{4(\mbox{cosh}^4\omega)|\phi|}dA\\
&&\\
&=& \mbox{area}(E)-4\displaystyle\int_U|\phi||dz|^2+\int_U\frac{|\nabla\omega|^2}{4(\mbox{cosh}^4\omega)|\phi|}dA,
\end{array}
$$
where the last term in the right hand side is finite by $(\ref{eqK}),$ once we have finite total curvature.

By the above equality, we conclude that area$(E)$ is finite if, and only if, $\phi=\left(\sum_{j\geq 2}\frac{a_{-j}}{z^j}\right)^2.$ Equivalently, area$(E)$ is infinite if, and only if, $\phi=\left(\sum_{j\geq1}\frac{a_{-j}}{z^j}+P(z)\right)^2,$ with $P\not\equiv 0$ or $a_{-1}\neq0.$ 

\vspace{0.2cm}

\textbf{Claim 1:} If the area of the end is infinite, then the function $\omega$ goes to zero uniformly at infinity.
\begin{proof} 
To prove this we use estimates on positive solutions of $\sinh$-Gordon equations by Han \cite{ZCH}, Minsky \cite{M} and Wan \cite{W} to our context.

Given $V$ any simply connected domain of $U=\{|z|\geq R_0\},$ we have the conformal coordinate $w=\int\sqrt{{\phi}}dz=u+iv$ with the flat metric $|dw|^2=|{\phi}||dz|^2$ on $V$.
In the case where $P\not\equiv0,$ the disk $D(w(z),|z|/2)$ contains a ball of radius at least $c|z|$ in the
metric $|dw|^2$ where $c$ does not depend on $z$.

In the case where $a_{-1}\neq 0$, we consider the conformal universal covering $\tilde U$ of the annulus $U$ given by the conformal change of coordinate $w=\ln (z) + f(z)$ where $f(z)$ extends holomorphically by zero at
the puncture. Any point $z$ in $U$ lifts to the center $w(z)$ of a ball $D(w(z), \ln( |z|/2)) \subset \tilde U$  for $|z| >2R_0$ large enough. 

The function $\omega$ lifts to the function $\tilde \omega \circ w (z):=\omega (z)$ on the $w$-plane which satisfies the equation
$$\Delta_{|\phi|} \tilde \omega= 2 \sinh 2  \tilde \omega $$
where $\Delta_{|\phi|}$ is the Laplacian in the flat metric $|dw|^2$. On the disc $D (w(z), 1)$ we consider 
the hyperbolic metric given by

%--------------------------------------------------------------
%Given $V$ any simply connected domain of $U=\{|z|\geq R_0\},$ we can take $\widetilde X:V\rightarrow\hr$ the lift of this region to $\hr.$ As $\phi$ is without zeroes on $U,$ then in particular, $\tilde{\phi}$ is without zeroes on $V,$
 %and we can parametrize $\widetilde{V}=\widetilde X(V)$ by $w=\displaystyle\int\sqrt{\tilde{\phi}}dz$ so that $|dw|^2=|\tilde{\phi}(z)||dz|^2$ is a flat metric on $\widetilde{V}.$

%Hence, for $|z|\geq 2R_0$ and $V=D(z,|z|/2)$ we have the conformal coordinate $w=\displaystyle\int\sqrt{\tilde{\phi}}dz=u+iv$ with the flat metric $|dw|^2=|\tilde{\phi}||dz|^2$ on $\widetilde{V}=\widetilde X(V).$ As either $a_{-1}\neq0$ or $P\not\equiv0,$ in this new metric $|dw|^2$ the disk $D(z,|z|/2)$ contains a ball of radius at least $c\ln|z|,$ where $c$ does not depend on $z.$

%As for $|z|\geq2R_0$ we can find a disk of radius at least $r=c\ln |z|$ around $z$ in the metric $|dw|^2,$ then for $R_0$ large enough we can find a disk with radius $1$ in the metric $|dw|^2$ around any point $z$ with $|z|\geq 2R_0.$ On this disk $D_{|\tilde\phi|}(z,1),$ we consider the hyperbolic metric given by
$$
d\sigma^2=\mu^2|dw|^2=\frac{4}{(1-|w-w(z) |^2)^2}|dw|^2.
$$

Then $\mu$ takes infinite values on $\partial D(w(z),1)$ and since the curvature of the metric $d\sigma^2$ is $K=-1,$ the function $\omega_2=\ln\mu$ satisfies the equation
$$
\Delta_{|\phi|}\omega_2=e^{2\omega_2}\geq e^{2\omega_2}-e^{-2\omega_2}=2\sinh\omega_2,
$$
Then the function $\eta(w) =\tilde\omega (w)-\omega_2 (w)$ satisfies
$$
\Delta_{|\phi|}\eta=e^{2\tilde\omega}-e^{-2\tilde\omega}-e^{2\omega_2}=e^{2\omega_2}\left(e^{2\eta}-e^{-4\omega_2}e^{-2\eta}-1\right),
$$
which can be written in the metric $d\tilde\sigma^2=e^{2\omega_2}|dw|^2$ as
$$
\Delta_{\tilde\sigma}\eta=e^{2\eta}-e^{-4\omega_2}e^{-2\eta}-1.
$$

Since $\omega_2$ goes to $+\infty$ on the boundary of the disk $D_{|\phi|}(w(z),1),$ the function $\eta$ is bounded above and attains its maximum at an interior point $q_0.$ At this point $\eta_0=\eta(q_0)$ we have
$$
e^{2\eta_0}-e^{-4\omega_2}e^{-2\eta_0}-1\leq0.
$$
which implies
$$
e^{2\eta_0}\leq \frac{1+\sqrt{1+4a^2}}{2},
$$
where $a=e^{-2\omega_2(q_0)}\leq \ \mbox{sup}\frac{1}{\mu^2}\leq \frac{1}{4}.$ Thus at any point of the disk $D_{|\phi|}(z,1),$ $\tilde\omega$ satisfies
$$
\tilde{\omega}\leq\omega_2+\frac{1}{2}\ln (\frac{2+\sqrt{5}}{4}).
$$
We observe that the same estimate above holds for $-\tilde\omega.$ Then at the point $z,$ we have
$$
|{\omega}(z)|=|\tilde \omega (w(z))| \leq \ln4+\frac{1}{2}\ln(\frac{2+\sqrt{5}}{4}):=K_0
$$
uniformly on $R\geq R_0.$ Using this estimate we can apply a maximum principle as in Minsky \cite{M}. We know that for $|z|$ large, we can find a disk $D_{|\phi|}(w(z),r)$ with $r$ large too. Now, consider the function
$$
F(u,v)=\frac{K_0}{\cosh r}\cosh \sqrt{2}u\cosh\sqrt{2}v.
$$

Then $F\geq K_0\geq \omega$ on $\partial D_{|\phi|}(w(z),r)$ and at $q_0$ we have $\Delta_{|\phi|}F=4F.$ Suppose the minimum of $F-\tilde\omega$ is a point $q_0$ where $\tilde\omega(q_0)\geq F(q_0).$ Then $0\leq \tilde\omega(q_0)\leq \sinh\tilde\omega(q_0)$ and
$$
\Delta_{|\phi|}(F-\tilde\omega)=4F-2\sinh2\tilde\omega\leq 4(F(q_0)-\tilde\omega(q_0))\leq0.
$$ 
Therefore we have necessarily $\tilde\omega\leq F$ on the disk. Considering the same argument to $F+\tilde\omega$ we can conclude $|\tilde\omega|\leq F.$ Hence
\begin{equation}
|\tilde\omega(w(z))|\leq \frac{K_0}{\cosh r}
\label{omegaK}
\end{equation}
and then $|\tilde\omega|\rightarrow0$ uniformly at the puncture, consequently $|\omega|\rightarrow0$ uniformly at infinity.
\end{proof}

\vspace{0.2cm}

\textbf{Claim 2:} If $P\not\equiv0$ then the end $E$ is not proper in $\mathcal M.$
\begin{proof}
Suppose $P\not\equiv0.$ Up to a change of variable, we can assume that the coefficient of the leading term of $P$ is one. Then, for suitable complex number $a_0, . . . , a_{k-1},$ we have
$$
P(z) = z^k + a_{k-1}z^{k-1} + · · · + a_0 \ \mbox{and} \ \sqrt{\phi}=z^{k}(1 + o(1)).$$

Let us define the function $$w(z)=\int\sqrt{\phi(z)}dz=\int\left(\sum_{j\geq1}\frac{a_{-j}}{z^j}+a_0+...+z^k\right).$$ 

If $a_{-1}=a+ib$ and we denote by $\theta\in\rr$ a determination of the argument of $z\in U,$ then locally 
\begin{equation}
\mbox{Im}(w)(z)=b\mbox{log}|z|+a\theta+\frac{|z|^{k+1}}{k+1}(\mbox{sin}(k+1)\theta+o(1))
\label{imW}
\end{equation}
and
\begin{equation}
\mbox{Re}(w)(z)=a\mbox{log}|z|-b\theta+\frac{|z|^{k+1}}{k+1}(\mbox{cos}(k+1)\theta+o(1)).
\label{reW}
\end{equation}

If $C_0 > \mbox{max}\{|\mbox{Im}(w)(z)|; |z|=R_0\},$ then the set $U\cap\{\mbox{Im}(w)(z) = C_0\}$ is composed of $k + 1$ proper and complete curves without boundary $L_0 , . . . , L_k$ (see Figure \ref{Hk1}).

\begin{figure}[h]
 \centering
\includegraphics[height=5cm]{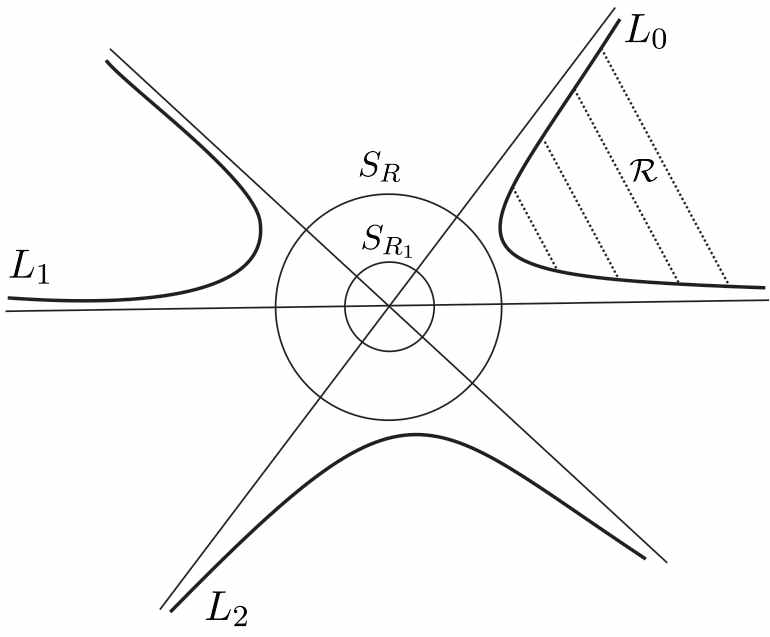}
\caption{$L_j$ for $k=2$.}
\label{Hk1}
\end{figure}

Take $\mathcal R$ a simply connected component of $U\cap\{\mbox{Im}(w)(z) \geq C_0\}.$ The holomorphic map $w(z)$ gives conformal parameters $w=u+iv, v\geq C_0,$ to $X(\mathcal R)\subset E.$  

Then $\widetilde X(w)=(\widetilde F(w),v)$ is a conformal immersion of $\mathcal R$ in $\hr$ and we have
$$
|\widetilde F_u|^2_{\sigma}=|\widetilde F_v|^2_\sigma+1 \ \mbox{and} \ \left\langle\widetilde F_u,\widetilde F_v\right\rangle_\sigma=0.
$$
Hence the holomorphic quadratic Hopf differential is
$$
Q_{\widetilde F}=\phi(w)(dw)^2=\frac{1}{4}\left(|\widetilde F_u|^2_{\sigma}-|\widetilde F_v|^2_\sigma+2i\left\langle \widetilde F_u,\widetilde F_v\right\rangle_\sigma\right)=\frac{1}{4}(dw)^2
$$
and the induced metric on these parameters is given by $ds^2=\mbox{cosh}^2\widetilde\omega|dw|^2.$

Consider the curve $\gamma(v)=\widetilde X(u_0+iv)=(\widetilde F(u_0,v),v).$ We have

$$d_{\hh}(\widetilde F(u_0,C_0),\widetilde F(u_0,v))\leq \int_{C_0}^v|\widetilde{F}_v|dv=\int_{C_0}^v|\sinh\tilde{\omega}|dv<\infty,$$
once we know $|\tilde{\omega}|\rightarrow0$ at infinity by Claim 1.

Thus, when we pass the curve $\gamma$ to the quotient by the third coordinate, we obtain a curve in $E$ which is not properly immersed in the quotient space $\mathcal M.$ Therefore, the claim is proved and we have $P\equiv0$ necessarily.
\end{proof}

\vspace{0.2cm}

%We will divide the proof of the theorem into two parts. On the first one, we will suppose the end is contained in $\mathcal M_+,$ and on the other one, we will suppose the end is contained in $\mathcal M_-.$

Suppose $E\subset \mathcal M_+.$ We have $E=X(U)$ homeomorphic to $\mathbb S^1\times \rr.$ Up to translation (along a geodesic not contained in $\tor(0)$), we can suppose that $E$ is transverse to $\tor(0).$ Then $E\cap\tor(0)$ is $k$ jordan curves $d_1,...,d_j,\alpha_1,...,\alpha_l,j+l=k,$ where each $d_i$ is homotopically zero in $E$ and each $\alpha_i$ generates the fundamental group of $E,\pi_1(E).$

We will prove that $l=1$ necessarily and the subannulus bounded by $\alpha_1$ is contained in $\cup_{s\geq0}\tor(s)$.

Assume $l\neq1.$ Then there exist $\alpha_1,\alpha_2\subset\tor(0)$ generators of $\pi_1(E).$ As $E\cong \mathbb S^1\times\rr,$ there exists $F\subset E$ such that $F\cong \mathbb S^1\times[0,1]$ and $\partial F=\alpha_1\cup \alpha_2.$ So $F$ is compact and its boundary is on $\tor(0).$ %(see Figure $\ref{F1}.$)
%\begin{figure}[h]
 % \centering
 %\includegraphics[height=5cm]{F1.jpg}
 %\label{F1}
%\end{figure}
By the maximum principle, $F\cap\left(\cup_{s<0}\tor(0)\right)=\emptyset.$ Hence $F\subset\cup_{s\geq 0}\tor(s)$ and then, since $E\subset \mathcal M_+,$ there exist a third jordan curve $\alpha_3$ that generates $\pi_1(E)$ and another cylinder $G$ such that $G\cap\left(\cup_{s<0}\tor(0)\right)\neq\emptyset$ and $\partial G$ is either $\alpha_1\cup \alpha_3$ or $\alpha_2\cup \alpha_3,$ but we have just seen that such $G$ can not exist. Therefore $l=1,$ that is, $E\cap\tor(0)=\alpha\cup d_1\cup ... \cup d_j,$ where $\alpha$ generates $\pi_1(E),$ the subannulus bounded by $\alpha$ is contained in $\cup_{s\geq0}\tor(s),$ and each $d_i\subset E$ bounds a disk on $E$ contained in $\cup_{s\geq0}\tor(s).$ 

\begin{remark}
The same holds true for $E\subset \mathcal M_-,$ that is, if $E\subset \mathcal M_-$ and $E$ is transversal to $\tor(s)$ then $E\cap\tor(s)$ is $l_s+1$ curves $\alpha,d_1,...,d_{l_s},$ where $d_i$ is homotopically zero in $E$ and $\alpha$ generates $\pi_1(E).$
\label{remark1}
\end{remark}

Take a point $p$ in the horocycle $c(0)\subset \mathbb H^2$ and consider $e_1=c(0)/[\psi],$ $e_2=p\times\rr/[T(h)].$ The curves $e_1,e_2$ are generators of $\pi_1(\tor(0)).$

As $E\subset \mathcal M_+$ and $\pi_1(\mathcal M_+)=\pi_1(\tor(0)),$ we can consider the inclusion map $i_*:\pi_1(E)\rightarrow \pi_1(\tor(0))$ and $i_*([\alpha])=n[e_1]+m[e_2],$ where $m,n$ are integers.

%In particular, if the area of the end is finite, $i_*([\alpha])$ can not have a component $[e_2].$ 

%Now let us analyse the possible cases for $n$ and $m.$

\vspace{0.3cm}

\textbf{Case 1.1:} $n=m=0.$ This case is impossible.

In fact, $n=m=0$ implies that $E$ lifts to an annulus in $\hr$ and we already know by Lemma \ref{annulus} that this is not possible.
%\begin{figure}[h]
 % \centering
 %\includegraphics[height=3.5cm]{alpha1.jpg}
 %\label{alpha}
%\end{figure}

%Now take two geodesics $\alpha,\beta$ sufficiently close,  as in the Figure $\ref{alpha}$, and consider the minimal annulus $\mathcal A$ bounded by the geodesics $\alpha\times\{\pm c\}, \beta\times\{\pm c\}$ (see section 5, \cite{MMR}).

%Therefore, we have 3 possible cases: $n\neq 0$ and $m=0,$ $n=0$ and $m\neq 0,$ and $n,m\neq0.$

\vspace{0.3cm}

\textbf{Case 1.2:} $n \neq0,m=0.$

We can assume, without loss of generality, that $\partial E\subset \tor(0)$. Call $\widetilde E$ a connected component of $\pi^{-1}(E\cap \mathcal M_+)$ such that $\pi(\widetilde {E})=E.$ We have that $\widetilde E$ is a proper minimal surface and its boundary $\partial\widetilde{E}=\pi^{-1}(\partial E)$ is a curve in $d(0)$ invariant by $\psi^n.$ Moreover, the horizontal projection of $\widetilde{E}$ on $\cup_{s\geq 0}c(s)\subset\hh$ is surjective.

%Consider $\widetilde{E}(s)=\widetilde{E}\cap [d(0),d(s)].$ So $\widetilde{E}(s)$ is contained in the region between the horocylinders $d(0)$ and $d(s)$ whose projection in $\mathcal M$ is a compact region. Hence, as $E$ is proper in $\mathcal M,$ there exists $M\in\rr$ such that the height coordinate satisfies $-M\leq \widetilde {h}\leq M$ in $\widetilde{E}(s).$ In particular, $-M\leq \widetilde {h}|_{\partial \widetilde {E}}\leq M.$ 

%Now take a horizontal geodesic $\gamma$ in $\hr$ whose endpoints $q,\bar{q}$ are different from $p_o$ such that $\gamma\cap\widetilde{E}(s)\neq\emptyset.$ For some $\epsilon>0,$ consider the function $v$ that takes the value $+\infty$ on $\gamma$ and take the value $M+\epsilon$ on the asymptotic arc at infinity between $q$ and $\bar{q}$ that does not contain $p_o$. The graph of $v$ is a minimal surface contained above the slice of height $M+\epsilon,$ then it does not intersect $\widetilde E.$ When we let $q,\bar{q}$ go to $p_o$ we get, using the maximum principle, $\widetilde E$ is under the graph of $v$ and then $\widetilde h|_{\widetilde E}$ is bounded above by $M+\epsilon$, since $v$ converges to the constant function $M+\epsilon$ uniformly on compact sets as $q,\bar{q}$ converge to $p_o$ (see section B, \cite{MMR}). Using a similar argument, we can show that $\widetilde h|_{\widetilde E}$ is also bounded below by $-M-\epsilon$.

By the Trapping Theorem in \cite{CHR2}, $\widetilde E$ is contained in a horizontal slab. Hence $\widetilde {h}|_{\widetilde E}$ is a bounded harmonic function, and then $h|_{E}$ is a bounded harmonic function defined on a punctured disk. Therefore $h$ has a limit at infinity, and then we can say that $Q$ extends to a constant at the origin, say zero. In particular, $\widetilde h$ has a limit at infinity.

The end of $\widetilde E$ is contained in a slab of width $2\epsilon>0$ and by a result of Collin, Hauswirth and Rosenberg \cite{CHR}, $\widetilde E$ is a graph outside a compact domain of $\hr.$ This implies that $\widetilde E$ has bounded curvature. Then there exists $\delta>0$ such that for any $p\in E,$ $B_E(p,\delta)$ is a minimal graph with bounded geometry over the disk $D(0,\delta)\subset T_pE.$

Now fix $s$ and consider a divergent sequence $\{p_n\}$ in $E.$ Applying hyperbolic translations to $\{p_n\}$(horizontal translations along a geodesic of $\hh$ that sends $p_n$ to a point in $\tor(s)$), we get a sequence of points in $\tor(s)$ which we still call $\{p_n\}.$ As $\tor(s)$ is compact, the sequence $\{p_n\}$ converges to a point $p\in\tor(s)$ and the sequence of graphs $B_E(p_n,\delta)$ converges to a minimal graph $B_E(p,\delta)$  with bounded geometry over $D(0,\delta)\subset T_pE.$

As $h$ has a limit at infinity, this limit disk $B_E(p,\delta)$ is contained in a horizontal slice. Then we conclude $n_3\rightarrow 1$ and $|\nabla h|\rightarrow 0$ uniformly at infinity, what yields a $C^1$-convergence of $E$ to a horizontal slice. Now using elliptic regularity we get $E$ converges in the $C^2$-topology to a horizontal slice. In particular, the geodesic curvature of $\alpha_s$ goes to $1$ and its length goes to zero, where $\alpha_s$ is the curve in $E\cap\tor(s)$ that generates $\pi_1(E).$ 

%Call $\gamma_s$ a connected component of $\widetilde E\cap d(s)$ whose projection is a curve $c_s$ homotopic to $c$ on $\Sigma.$ As $\widetilde\omega\rightarrow\infty, \tilde {n}_3\rightarrow1$ and then the tangent plane becomes horizontal at infinity. As $\gamma_s'\in T_{\gamma_s}\widetilde E$ and it becomes horizontal, we conclude the curve $\gamma_s$ is a curve in $d(s)$ whose tangent vector becomes horizontal. Moreover, $d(\gamma_s(t),c(s)\times\{0\})=\widetilde {h}(\gamma_s(t)),$ for all $t.$ As $\widetilde h$ has a limit at infinity and $|\nabla \widetilde {h}|\rightarrow 0,$ we conclude the length of $\gamma_s$ goes to zero and its geodesic curvature goes to one. 
%and the end $E$ is asymptotic to a slice $\hh/[\psi]\times\{c_0\}, c_0\in\mathbb S^1.$  

Denote by $E_s$ the part of the end $E$ bounded by $\partial E$ and $\alpha_s.$ Applying the Gauss-Bonnet theorem for $E_s,$ we obtain
$$
\int_{E_s}K + \int_{\alpha_s}k_g -\int_{\partial E}k_g=0.
$$
By our analysis in the previous paragraph, we have $\int_{\alpha_s}k_g\rightarrow0,$ when $s\rightarrow\infty.$ Then when we let $s$ go to infinity, we get
$$
\int_E K=\int_{\partial E}k_g,
$$
as we wanted to prove.

\vspace{0.2cm}

\textbf{Claim 3:} If $m\neq0$ then the area of the end is infinite.
\begin{proof} 
In fact, consider $g:\Sigma\rightarrow \rr$ the extrinsic distance function to $\tor(0),$ that is, $g=d_{\mathcal M}(\ .\ , \tor(0)).$ Hence $|\nabla^{\mathcal M} g|=1$ and $g^{-1}(s)=\Sigma\cap\tor(s).$ We know for almost every $s,$ $\Sigma\cap\tor(s)=\alpha_s\cup d_1\cup ...\cup d_l,$ where $\alpha_s$ generates $\pi_1(E)$ and $d_i$ is homotopic to zero in $E$. Then, by the coarea formula,
$$
\begin{array}{rclrl}
\displaystyle\int_{\{g\leq s\}}1dA&=&\displaystyle\int_{-\infty}^s\left(\int_{\{g=\tau\}}\frac{ds_{\tau}}{|\nabla^{\Sigma} g|}\right)d\tau &\geq&\displaystyle \int_{0}^s |\alpha_\tau|d\tau\\
&&\\
&\geq& \displaystyle\int_0^s |e_2|d\tau \ =\ s|e_2|,
\end{array}
$$
where the last inequality follows from the fact we are supposing that $i_{*}[\alpha_s]$ has a component $[e_2],$ and in the last equality we use that the curve $e_2$ has constant length. Hence when we let $s$ go to infinity, we conclude the area of $E$ is infinite.
\end{proof}

So if $E\subset\mathcal M_+$ and $m\neq0,$ then the area of $E$ is infinite. Also, we know by Lemma \ref{prop-area} that all the ends contained in $\mathcal M_-$ have infinite area. Thus we will analyse all these cases together using the commom fact of infinite area.

Suppose we have an end $E$ with infinite area. We can assume without loss of generality that $\partial E\subset \tor(0).$ We know that $\phi=\left(\sum_{j\geq1}\frac{a_{-j}}{z^j}\right)^2$ with $a_{-1}\neq0$ for $|z|\geq R_0$, and $|\omega|\rightarrow0$ uniformly at infinity by Claim 1. In particular, we know that the tangent planes to the end become vertical at infinity. 

Let $X: D^*(0,1)\subset\mathbb C\rightarrow \mathcal M $ be a conformal parametrization of the end from a punctured disk (we suppose, without loss of generality, that the punctured disk is the unit punctured disk). Now consider the covering of $D^*(0,1)$ by the halfplane $HP:=\{w=u+iv, u<0\}$ through the holomorphic exponential map $e^w:HP\rightarrow D^*(0,1).$ Hence, we can take $\hat{X}=X\circ e^w:HP\rightarrow \mathcal M$ a conformal parametrization of the end from a halfplane.

We denote by $h,\hat{h}$ the third coordinates of $X$ and $\hat{X},$ respectively. We already know $h(z)=a\ln|z|+b\mbox{arg}(z)+p(z)$ for $z\in D^*(0,1),$ where either $a$ or $b$ is not zero, and $p$ is a polynomial function. Hence $|p(z)|\rightarrow0$ when $|z|\rightarrow0$ and $\hat{h}(w)=au+bv+\hat{p}(w),$ where $u=\mbox{Re}\ (w),v=\mbox{Im}\ (w)$ and $\hat{p}(w)=p(e^w).$

As the halfplane is simply connected, consider $\widetilde {X}:HP\rightarrow\hr$ the lift of $\hat {X}$ into $\hr.$ We have $\widetilde {X}=(\widetilde F,\widetilde h),$ where $\widetilde h(w)=au+bv+\widetilde p(w),$ with $|\widetilde p(w)|\rightarrow0$ when $|w|\rightarrow\infty.$ Up to a conformal change of parameter, we can suppose that $\widetilde{h}(w)=au+bv.$

Observe $\partial \widetilde E=\widetilde X(\{u=0\})$ and the curve $\{\widetilde h=c\}$ is the straight line $\{au+bv=c\}.$ We have three cases to analyse.

\vspace{0.3cm}

\textbf{Case 2.1:} $a=0,b\neq0,$ that is, the third coordinate satisfies $h(z)=b\mbox{arg}(z)+O(|z|).$

Without loss of generality we can suppose $b=1.$ Hence in this case, $\widetilde h(w)=v$ and $\partial\widetilde{E}=\widetilde{X}(\{u=0\}).$

We have $\widetilde X(w)=(\widetilde F(w),v)$ a conformal immersion of $\widetilde E,$ and
$$
|\widetilde F_u|^2_{\sigma}=|\widetilde F_v|^2_\sigma+1 \ \mbox{and} \ \left\langle\widetilde F_u,\widetilde F_v\right\rangle_\sigma=0.
$$
Hence the holomorphic quadratic Hopf differential is
$$
\widetilde Q_{\widetilde F}=\tilde\phi(w)(dw)^2=\frac{1}{4}\left(|\widetilde F_u|^2_{\sigma}-|\widetilde F_v|^2_\sigma+2i\left\langle \widetilde F_u,\widetilde F_v\right\rangle_\sigma\right)=\frac{1}{4}(dw)^2
$$
and the induced metric on these parameters is given by $ds^2=\mbox{cosh}^2\widetilde\omega|dw|^2.$

Moreover, by $(\ref{omegaK})$ there exists a constant $K_0>0$ such that
\begin{equation}
|\widetilde\omega(w)|\leq \frac{K_0}{\mbox{cosh}r},
\label{w-estimate}
\end{equation}
for $r=\sqrt{u^2+v^2}$ sufficiently large.

Using Schauder's estimates and (\ref{w-estimate}), we obtain 
$$
|\widetilde\omega|_{2,\alpha}\leq C\left(|\sinh\widetilde\omega|_{0,\alpha}+|\widetilde\omega|_0\right)\leq Ce^{-r}.
$$
Then
\begin{equation}
|\nabla\widetilde\omega|\leq Ce^{-r}.
\label{grad-w}
\end{equation}

Now consider the curve $\gamma_c=\widetilde E\cap\hh\times\{v=c\},$ that is, $\gamma_c(u)=(\widetilde F(u,c),c).$ Let $(V,\sigma(\eta)|d\eta|^2)$ be a local parametrization of $\hh$ and define the local function $\varphi$ as the argument of  $\widetilde F_u,$ hence
$$
\widetilde F_u=\frac{1}{\sqrt{\sigma}}\cosh \widetilde\omega e^{i\varphi} \ \mbox{and} \ \widetilde F_v=\frac{i}{\sqrt{\sigma}}\sinh\widetilde \omega e^{i\varphi}.
$$

If we denote by $k_g$ the geosedic curvature of $\gamma_c$ in $(V,\sigma(\eta)|d\eta|^2)$ and by $k_e$ the Euclidean geodesic curvature of $\gamma_c$ in $(V,|d\eta|^2),$ we have
$$
k_g=\frac{k_e}{\sqrt{\sigma}}-\frac{\left\langle \nabla \sqrt{\sigma}, n\right\rangle}{\sigma},
$$ 
where $n=(-\sin\varphi,\cos\varphi)$ is the Euclidean normal vector to $\gamma_c.$ If $t$ denotes the arclength of $\gamma_c,$ we have
$$
k_e=\varphi_t=\frac{\varphi_u\sqrt{\sigma}}{\cosh\widetilde\omega}
$$
and
$$
\frac{\left\langle \nabla \sqrt{\sigma}, n\right\rangle}{\sigma}= \frac{\left\langle \nabla\mbox{log} \sqrt{\sigma}, n\right\rangle}{\sqrt{\sigma}}=\frac{1}{2\sqrt{\sigma}}\left(\cos\varphi(\log\sigma)_{\eta_2}-\sin\varphi(\log\sigma)_{\eta_1} \right).
$$
Then,
\begin{equation}
k_g=\frac{\varphi_u}{\cosh\widetilde\omega}-\frac{1}{2\sqrt{\sigma}}\left(\cos\varphi(\log\sigma)_{\eta_2}-\sin\varphi(\log\sigma)_{\eta_1} \right).
\label{kg}
\end{equation}

In the complex coordinate $w,$ we have
\begin{equation}
\widetilde F_w=\frac{e^{\widetilde\omega+i\varphi}}{2\sqrt{\sigma}} \ \mbox{and} \ \widetilde F_{\bar{w}}=\frac{e^{-\widetilde\omega+i\varphi}}{2\sqrt{\sigma}}.
\label{eqFw}
\end{equation}
Moreover, the harmonic map equation in the complex coordinate $\eta=\eta_1+i\eta_2$ of $\hh$ (see \cite{SY}, page 8) is
\begin{equation}
\widetilde F_{w\bar{w}}+(\log\sigma)_{\eta}\widetilde F_w\widetilde F_{\bar w}=0.
\label{eq11}
\end{equation}
Then using $(\ref{eqFw})$ and $(\ref{eq11})$ we obtain
\begin{equation}
\begin{array}{rcl}
(-\widetilde\omega+i\varphi)_w&=&-\sqrt{\sigma}\left(\frac{1}{\sqrt\sigma}\right)_w-(\log \sigma)_{\eta}\widetilde F_w\\
&&\\
&=&\frac{1}{2}(\log \sigma)_w-(\log \sigma)_{\eta}\widetilde F_w\\
&&\\
&=&\frac{1}{2}\left((\log\sigma)_{\eta}\widetilde F_w+(\log\sigma)_{\bar {\eta}}\bar{\widetilde F}_w\right)-(\log \sigma)_{\eta}\widetilde F_w\\
&&\\
&=&\frac{1}{2}(\log\sigma)_{\bar\eta}\bar{\widetilde F}_w-\frac{1}{2}(\log\sigma)_{\eta}\widetilde F_w,
\end{array}
\label{eq12}
\end{equation}
where $2(\log\sigma)_{\eta}=(\log\sigma)_{\eta_1}-i(\log\sigma)_{\eta_2}$ and $\bar{\widetilde F_w}=\frac{1}{2\sqrt\sigma}e^{-\widetilde\omega-i\varphi}.$

Taking the imaginary part of $(\ref{eq12}),$ we get
\begin{equation}
\varphi_u+\widetilde\omega_v=\frac{\cosh\widetilde\omega}{2\sqrt\sigma}\left(\cos\varphi(\log\sigma)_{\eta_2}-\sin\varphi(\log\sigma)_{\eta_1}\right).
\label{eq13}
\end{equation}

By $(\ref{kg})$ and $(\ref{eq13}),$ we deduce
\begin{equation}
k_g=-\frac{\widetilde\omega_v}{\cosh\widetilde\omega}.
\label{eqKg}
\end{equation}

Therefore, by $(\ref{w-estimate})$ and $(\ref{grad-w}),$ when $c\rightarrow+\infty,$ $k_g(\gamma_c)(u)\rightarrow0$ and also when we fix $c$ and let $u$ go to infinity the geodesic curvature of the curve $\gamma_c$ goes to zero. In particular, for $\widetilde h$ sufficiently large, the asymptotic boundary of $\gamma_c$ consists in only one point (see \cite{HNST}, Proposition 4.1).

We will prove that the family of curves $\gamma_c$ has the same boundary point at infinity independently on the value $c.$ Fix $u_0$ and consider $\alpha_{u_0}$ the projection onto $\hh$ of the curve $\widetilde X(u_0,v)=(\widetilde F(u_0,v),v),$ that is, $\alpha_{u_0}(v)=\widetilde F(u_0,v)\in\hh.$ We have $\alpha_{u_0}'(v)=\widetilde F_v$ and $|\alpha_{u_0}'(v)|_{\sigma}=|\sinh\widetilde\omega|.$ Then
$$
d(\alpha_{u_0}(v_1),\alpha_{u_0}(v_2))\leq l(\alpha_{u_0}{|_{[v_1,v_2]}})=\int_{v_1}^{v_2}|\sinh\widetilde\omega|dv\leq \int_{v_1}^{v_2} \sinh {\rm e}^{-r} dv,
$$
where $r=\sqrt{u_0^2+v^2}.$ Thus, for any $v_1,v_2,$ we have $d(\alpha_{u_0}(v_1),\alpha_{u_0}(v_2))\rightarrow0$ when $u_0\rightarrow-\infty.$

% and, moreover, for any fixed $u_0$ 
%\begin{equation}
%d(\alpha_{u_0}(0),\alpha_{u_0}(v))\leq d(\alpha_{u_0}(0),\alpha_{u_0}(\infty))\leq \int_0^{\infty} \sinh \rm{e}^{-r} dv<\infty.
%\label{eqD}
%\end{equation}

Therefore, the asymptotic boundary of all horizontal curves $\gamma_c$ in $\widetilde E$ coincide, and we can write $\partial_{\infty}\widetilde {E}=p_0\times\rr$. 

Observe that as $\widetilde h|_{\partial\widetilde E}$ is unbounded, then we have two possibilities for $\partial \widetilde{E},$ either $\partial \widetilde{E}$ is invariant by a vertical translation or is invariant by a screw motion $\psi^n\circ T(h)^m, n,m\neq 0.$

\textbf{Subcase 2.1.1:} $\partial \widetilde{E}$ invariant by vertical translation and $E\subset \mathcal M_+.$

In this case, by the Trapping Theorem in \cite{CHR2}, $\widetilde E$ is contained in a slab between two vertical planes that limit to the same vertical line at infinity, $p_0\times\rr$. Moreover, since $|\widetilde\omega|\rightarrow 0$, then we get bounded curvature by (\ref{eqKK}). The same holds true for $E$ in $\mathcal M_+.$

Thus, using the same argument as in Case 1.2, we can show that in fact $E$ converges in the $C^2$-topology to a vertical plane. Therefore, the geodesic curvature of $\alpha_s$ goes to zero and its length stays bounded, where $\alpha_s$ is the curve in $E\cap\tor(s)$ that generates $\pi_1(E).$

Applying the Gauss-Bonnet theorem for $E_s,$ the part of the end $E$ bounded by $\partial E$ and $\alpha_s,$ we obtain
$$
\int_{E_s}K + \int_{\alpha_s}k_g -\int_{\partial E}k_g=0.
$$
By our analysis in the previous paragraph, we have $\int_{\alpha_s}k_g\rightarrow0,$ when $s\rightarrow\infty.$ Then, when we let $s$ go to infinity, we get
$$
\int_E K=\int_{\partial E}k_g,
$$
as we wanted to prove.

\textbf{Subcase 2.1.2:} $\partial \widetilde{E}$ invariant by vertical translation and $E\subset \mathcal M_-.$

As $\partial \widetilde{E}$ invariant by vertical translation, then we can find a horizontal geodesic $\gamma$ in $\hh$ such that $\gamma$ limits to $p_0$ at infinity and $\gamma\times\rr$ does not intersect $\partial\widetilde E.$ Call $q_0$ the other endpoint of $\gamma.$ Take $q\in\partial_{\infty}\hh$ contained in the halfspace determined by $\gamma\times\rr$ that does not contain $\partial \widetilde E$. As the asymptotic boundary of $\widetilde E$ is just $p_0\times\rr,$ then $\overline{qq_0}\times\rr$ does not intersect $\widetilde E$ for $q$ sufficiently close to $q_0.$ Also note that for any $q,$ $\overline{qq_0}\times\rr$ can not be tangent at infinity to $\widetilde E,$ because $E$ is proper in $\mathcal M.$ Thus, if we start with $q$ close to $q_0$ and let $q$ go to $p_0,$ we conclude that in fact $\gamma\times\rr$ does not intersect $\widetilde E,$ by the maximum principle. Now if we consider another point $\bar{q_0}\in\partial_{\infty}\hh$ contained in the same halfspace determined by $\gamma\times\rr$ as $\partial \widetilde E$ and such that $\bar{\gamma}\times\rr=\overline{\bar{q_0}p_0}\times\rr$ does not intersect $\partial \widetilde E,$ we can prove using the same argument above that $\bar{\gamma}\times\rr$ does not intersect $\widetilde E.$ Thus we conclude that $\widetilde E$ is contained in the region between two vertical planes that limit to $p_0\times\rr$.

As $|\widetilde\omega|\rightarrow 0$, we get bounded curvature by (\ref{eqKK}). So $E\subset\mathcal M_-$ is a minimal surface with bounded curvature contained in a slab bounded by two vertical planes that limit to the same point at infinity. Hence, using the same argument as in Case 1.2, we can show that $E$ converges in the $C^2$-topology to a vertical plane. Therefore, as in Subcase 2.1.1 above, we get
$$
\int_E K=\int_{\partial E}k_g.
$$

\textbf{Subcase 2.1.3:} $\partial \widetilde{E}$ invariant by screw motion and $E\subset \mathcal M_+.$

In this case, by the Trapping Theorem in \cite{CHR2}, $\widetilde E$ is contained in a slab between two parallel Helicoidal planes and, since $|\widetilde\omega|\rightarrow 0$, we get bounded curvature by (\ref{eqKK}). Then $E$ is a minimal surface in $\mathcal M_+$ with bounded curvature contained in a slab between the quotient of two parallel Helicoidal planes.

Thus, using the same argument as in Case 1.2, we can show that in fact $E$ converges in the $C^2$-topology to the quotient of a Helicoidal plane. In particular, the geodesic curvature of $\alpha_s$ goes to zero and its length stays bounded, where $\alpha_s$ is the curve in $E\cap\tor(s)$ that generates $\pi_1(E).$

Applying the Gauss-Bonnet theorem for $E_s,$ the part of the end $E$ bounded by $\partial E$ and $\alpha_s,$ we obtain
$$
\int_{E_s}K + \int_{\alpha_s}k_g -\int_{\partial E}k_g=0.
$$
By our previous analysis, we have $\int_{\alpha_s}k_g\rightarrow0,$ when $s\rightarrow\infty.$ Then, when we let $s$ go to infinity, we get
$$
\int_E K=\int_{\partial E}k_g,
$$
as we wanted to prove.

\textbf{Subcase 2.1.4:} $\partial \widetilde{E}$ invariant by screw motion and $E\subset \mathcal M_-.$

By Remark \ref{remark1}, we know that for almost every $s\leq 0,$ $\widetilde E\cap d(s)$ contains a curve invariant by screw motion, so it is not possible to have $p_0\times\rr$ as the only asymptotic boundary. Thus this subcase is not possible.

\vspace{0.3cm}

\vspace{0.3cm}

\textbf{Case 2.2:} $a\neq0.$ We will show this is not possible.

Consider the change of coordinates by the rotation $e^{i\theta}w: HP\rightarrow \widetilde {HP},$ where $\tan\theta=\frac{a}{b}$ (notice that if $b=0,$ then $\theta=\pi/2$) and $\widetilde{HP}=e^{i\theta}(HP)\subset\{\tilde{w}=\tilde{u}+i\tilde{v}\}.$ From now on, when we write one curve in the plane $\tilde w=\tilde u+i\tilde v,$ we mean the part of this curve contained in $\widetilde{HP}.$

In this new parameter $\tilde{w},$ we have $\partial \widetilde E=\widetilde X(\{b\tilde{u}+a\tilde{v}=0\}),$ the curve $\{\widetilde h=c\}$ is the straight line $\{\tilde{v}=\frac{c}{\sqrt{a^2+b^2}}\}.$ (See Figure \ref{case2-3}).

\begin{figure}[h]
\centering
\includegraphics[height=4cm]{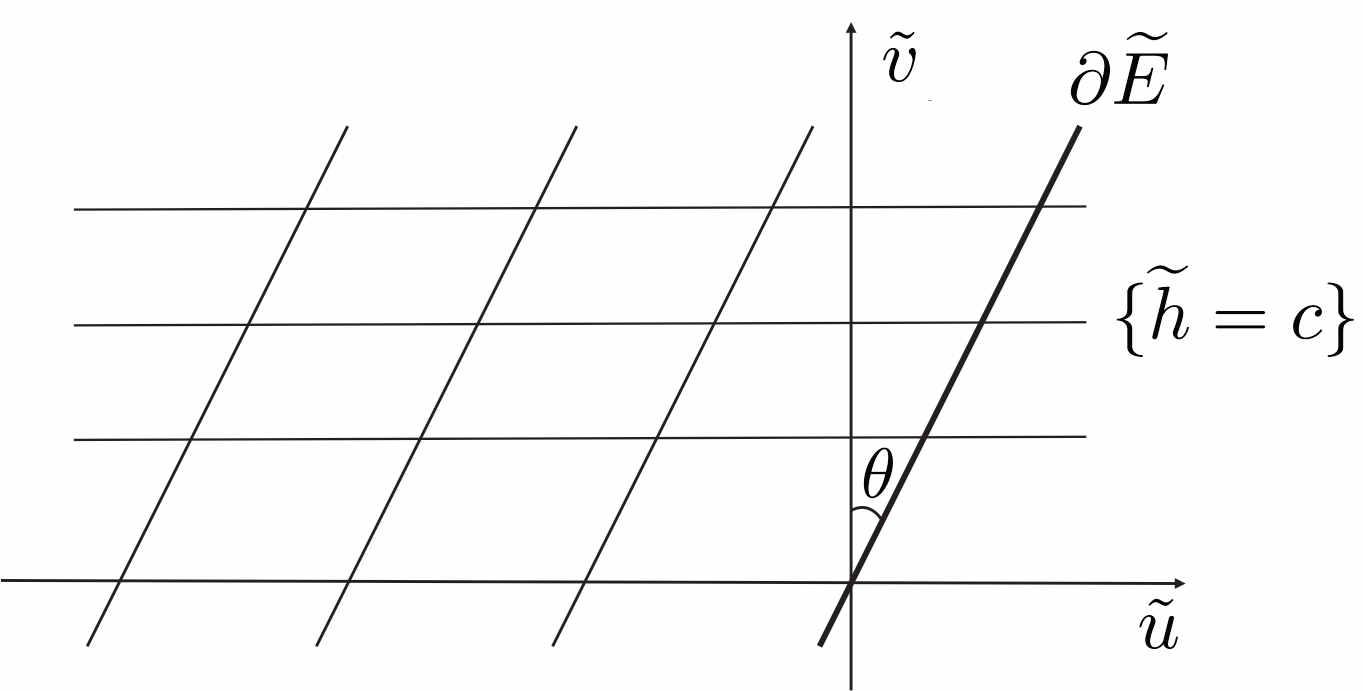}
\caption{Parameter $\tilde w=\tilde u+i\tilde v.$}
\label{case2-3}
\end{figure}

Now consider the curve $\beta(t)=(0,t), t\geq 0.$ The angle between $\widetilde{X}(\beta)$ and $\partial \widetilde E$ is $\theta\neq0$ and $\widetilde{X}(\beta)$ is a divergent curve in $\widetilde E.$ However, the curve $\widetilde{F}(\beta)=\widetilde{F}(0,t)$ satisfies
$$
l(\widetilde{F}(\beta))=\frac{1}{|a|}\int_{0}^{t}|\widetilde{F}_{\tilde{v}}|d\tilde{v}=\frac{1}{|a|}\int_{0}^{t}|\sinh\widetilde{\omega}|d\tilde{v}\leq C,
$$
for some constant $C$ not depending on $t,$ since we know by $(\ref{omegaK})$ that $|\tilde\omega|\rightarrow0$ at infinity. This implies that when we pass the curve $\widetilde{X}(\beta)$ to the quotient space $\mathcal M,$ we obtain a curve in $E$ which is not proper in $\mathcal M,$ what is impossible, once the end $E$ is proper.

\vspace{0.5cm}

Therefore, analysing the geometry of all possible cases for the ends of a proper immersed minimal surface with finite total curvature $\Sigma$ in $\mathcal M,$ we have proved the theorem.
\end{proof}
\begin{remark}
The case of a Helicoidal end contained in $\mathcal M_+$ is in fact possible, as shows the example constructed by the second author in section 4.3 in \cite{Me}. The example is a minimal surface contained in $\mathcal M$ with two vertical ends and two Helicoidal ends.
\end{remark}

\begin{flushleft}
LAURENT HAUSWIRTH, Universit\'e Paris-Est, LAMA (UMR 8050), UPEMLV, UPEC, CNRS, F-77454, Marne-la-Vall\'ee, France.

\textit{E-mail address:} laurent.hauswirth@univ-mlv.fr
\end{flushleft}

\begin{flushleft}
ANA MENEZES, Instituto de Matem\'atica Pura e Aplicada (IMPA), Estrada Dona Castorina 110, 22460-320 Rio de Janeiro, Brazil

\textit{E-mail address:} anamaria@impa.br
\end{flushleft}

\end{document}